\documentclass[oneside, reqno] {amsart}
\usepackage{graphicx} 
\usepackage{amsmath}
\usepackage{amssymb}
\usepackage{euscript}
\usepackage{empheq}
\usepackage{amsthm}
\usepackage{mathtools}
\usepackage[dvipsnames]{xcolor}
\usepackage{xcolor}
\usepackage{enumitem}
\usepackage{cite} 
\setcounter{subsection}{0}

\newtheorem{theorem}{Theorem}[section]

\newtheorem{lemma}[theorem]{Lemma}
\numberwithin{equation}{section}
\newtheorem{proposition}{Proposition}[section]

\theoremstyle{definition}
\newtheorem{definition}{Definition}[section]
\newtheorem{remark}{Remark}[section]

\title{Singularity formation for the 1D model of EMHD}

\begin{document}
\author{Chao Wu}
\address{Department of Mathematics, Statistics and Computer Science, University of Illinois at Chicago, Chicago, IL 60607, USA}
\email{cwu206@uic.edu}
\begin{abstract}
    In this paper, we study the singularity formation phenomenon of the 1D model of Electron Magnetohydrodynamics (EMHD) given by (\ref{the model}). We will construct a solution whose $C^3$-norm blows up in finite time. Then, we will show that the solution is in $C^{\infty}(\mathbb{R}\backslash \{0\})\cap C^{3,s}(\mathbb{R})\cap H^3(\mathbb{R})$ and is not asymptotically self-similar.
\end{abstract}
\allowdisplaybreaks
\counterwithin{equation}{section}
\nopagebreak
\maketitle

\section{Introduction}
\subsection{An overview}
We consider the 1 model of EMHD given by
\begin{equation}\label{the model}
    \begin{cases}
        B_t=-2bJB_x+b HB_{xx}B\\
        B_x=HJ,
    \end{cases}
\end{equation}
where $b>0$ and $H$ is the Hilbert transform on $\mathbb{R}$. The study of 1D toy models for understanding the original equations dates back to work on the Euler equations. For example, Constantin, Lax and Majda proposed a 1D model for the Euler equation \cite{constantin1985simple}. The model is given by
\begin{align*}
    \omega_t&=\omega H\omega\\
    u_x&=H\omega.
\end{align*}The authors solved the model exactly and showed the singularity formation phenomenon for a class of initial data. Then De Gregorio proposed the De Gregorio model \cite{de1990one, de1996partial}. The model is given by
\begin{align*}
    \omega_t+u\omega_x&=\omega H\omega\\
    u_x&=H\omega
\end{align*}
which is different from the Constantin-Lax-Majda model by the appearance of the transport term. The numerical results of \cite{de1990one, de1996partial} showed some evidence that a finite-time blow-up may not occur. This suggests that the transport term has some smoothing effects. In order to understand the competition between the transport term and the stretching term, Okamoto, Sakajo and Wunsch \cite{okamoto2008generalization} suggested to study the following model
\begin{align*}
    \omega_t+au\omega_x&=\omega H\omega\\
    u_x&=H\omega,
\end{align*}
and conjectured the global in time existence of a solution for the De Gregorio model. Later, Jia, Stewart and \v Sver\`k \cite{jia2019gregorio} proved that for the De Gregorio model, solutions with initial data near a steady state are global and converge to that steady state. In contrast, Elgindi and Jeong \cite{elgindi2020effects} showed the phenomenon of singularity formation for initial data in H\"older spaces. They also showed a self-similar blow-up for the Okamoto-Sakajo-Wunsch model with small $|a|$. Later, Elgindi, Ghoul, and Masmoudi \cite{elgindi2021stable} further showed that such a self-similar blow-up is stable.

Following the footprint of the study on the 1D models of fluid equations, several 1D models of magnetohydrodynamics were proposed. For example, Dai, Vyas and Zhang \cite{Dai2023} proposed a 1D model for the 3D incompressible ideal magnetohydrodynamics. The model is given by
\begin{align*}
    \Omega_t+m\Omega_x-\omega p_x&=0,\\
    \omega_t+ap\omega_x-\Omega m_x&=0,\\
    p_x&=H\Omega,\\
    m_x&=H\omega.
\end{align*}
The authors of \cite{Dai2023} showed the local well-posedness and established the Beale-Kato-Majda regularity criterion. Also, the global existence was obtained without the stretching effect. Later, Dai \cite{dai2022reducedmodelselectronmagnetohydrodynamics} proposed a family of simplified non-local non-linear models for EMHD, which are known as the 1D models of EMHD. The models are given by
\begin{align*}
    B_t+aJB_x&=b HB_{xx}B+\mu\Lambda^{\alpha}B,\quad a,b\in\mathbb{R},\,\alpha>0\\
        B_x&=HJ.
\end{align*}
Comparing with the original EMHD given by
\begin{equation}
\begin{split}
B_t+ \nabla\times ((\nabla\times B)\times B)=&\ \nu\Delta B\\
\nabla\cdot B=&\ 0,
\end{split}
\end{equation}
the spatial geometric features are suppressed in the 1D simplifications, but it still captures the two important nonlinear effects: transport and stretching. In addition, it is believed that the study of the 1D model is helpful in studying the axisymmetric solution to the electron MHD. The reader is referred to \cite{dai2025wellposednessblowup1delectron} for more details. Compared with the generalized De Gregorio model, the nonlinearities of the 1D model of EMHD are more singular, which makes the study of some topics such as local wellposedness more difficult. Despite this, local wellposedness results can still be expected with dissipation $\Delta^{\alpha}B$. In \cite{dai2022reducedmodelselectronmagnetohydrodynamics}, the author gave a local wellposedness result in the regime $\alpha\in(2,\infty)$ on torus with initial data in Sobolev spaces. Recently, the exponent of dissipation was improved to $\alpha>1$ \cite{dai2025wellposednessblowup1delectron}. The singularity formation of the 1D model has also been studied recently \cite{dai2022reducedmodelselectronmagnetohydrodynamics, dai2025wellposednessblowup1delectron}. In this paper, we continue to study the singularity formation phenomena.

\subsection{The main result}
The main result of the paper is the following:
\begin{theorem}
    For any $b\neq0$ and $\epsilon>0$, there exist $s\in(0,1)$, $T>0$, $C>0$ and a solution to the 1D model of (\ref{the model}) in the time interval $[-T,0)$ such that for all $t$ in this interval, the solution $B(\cdot,t)\in C^{\infty}(\mathbb{R}\backslash \{0\})\cap C^{3,s}(\mathbb{R})\cap H^3(\mathbb{R})$ and such that $\frac{1}{C|t|}\leq\max|\partial_x^3B(\cdot,t)|\leq\frac{C}{|t|}$. Moreover, $B_{n,k}(x,t)$ is not asymptotically self-similar.
\end{theorem}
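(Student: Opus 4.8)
\emph{Setup.} The plan is to build the solution essentially by hand in self-similar variables, since (\ref{the model}) carries no dissipation and one cannot invoke an off-the-shelf local well-posedness theorem; the family $B_{n,k}$ is written down semi-explicitly and verified a posteriori to solve the equation on $[-T,0)$. First eliminate $J$: from $B_x=HJ$ and $H^2=-\mathrm{Id}$ we get $J=-HB_x$, so (\ref{the model}) becomes the scalar equation
\[
  B_t=2b\,(HB_x)B_x+b\,(HB_{xx})B .
\]
With $\Lambda:=H\partial_x=|\nabla|$ this reads $B_t=b\,(\Lambda B)B_x+b\,\partial_x\!\big((\Lambda B)B\big)$, and along the characteristics $\dot X=-2b\,\Lambda B$ one finds $\frac{d}{dt}B(X(t),t)=b\,(HB_{xx})(X(t),t)\,B(X(t),t)$: blow-up is driven by the stretching factor $HB_{xx}$ becoming large and sign-definite near the stagnation point of the flow, which I place at $x=0$. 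The equation is invariant under $B(x,t)\mapsto\mu B(\lambda x,\mu\lambda^2 t)$, and working with odd $B$ (odd symmetry is preserved and fixes the origin as a stagnation point) is natural.

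\emph{The reduced dynamics and why it is not self-similar.} A scaling count explains the form of the answer: for a putative self-similar blow-up $(-t)^{-a}F(x/(-t)^{c})$ the symmetry forces $a+2c=1$, while a third-derivative blow-up of exactly rate $1/|t|$ with $F'''$ bounded (as $C^{3,s}$ demands) forces $a+3c=1$, hence $c=0$, $a=1$. This singles out the \emph{amplitude} rescaling $B(x,t)=\tfrac1{-t}\,F(x,\tau)$, $\tau=\log(-t)$, under which the scalar equation becomes the \emph{autonomous} profile flow
\[
  \partial_\tau F=F-2b\,(HF_x)F_x-b\,(HF_{xx})F .
\]
Its stationary points solve $F_*=2b(HF_{*x})F_{*x}+b(HF_{*xx})F_*$ and generate exactly self-similar blow-ups; to defeat asymptotic self-similarity I instead construct a bounded, non-stationary — e.g.\ $\tau$-periodic — orbit of this flow. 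I would work in the class of odd profiles that near $x=0$ behave like $g\,|x|^{3+s}\mathrm{sgn}(x)$ (the exponent $3+s$ chosen so that the third derivative is $C^{0,s}$ but not $C^1$), are smooth away from $0$, and decay. Near the origin the quadratic nonlinearity is of strictly higher homogeneity ($\sim|x|^{4+2s}$ against $\sim|x|^{3+s}$), so the tip coefficient obeys $g'=g$ to leading order and the tip of $B=\frac1{-t}F$ \emph{freezes} to a fixed singular profile $g_0|x|^{3+s}\mathrm{sgn}(x)$, while the genuine, oscillating dynamics lives in the $O(1)$ bulk of $F$. To produce the periodic orbit I would project the bulk dynamics onto a low-dimensional, approximately invariant family of profiles, using the Hilbert-transform identity $H(|x|^{\alpha}\mathrm{sgn}(x))=\kappa_\alpha|x|^{\alpha}+(\text{l.o.t.})$ and its parity counterpart to evaluate the nonlinearity on that family, reduce to a planar ODE, obtain a limit cycle (a trapping region free of fixed points plus Poincar\'e--Bendixson, or a Hopf bifurcation), and finally close a weighted bootstrap/energy estimate for the infinite-dimensional remainder around the cycle on a short interval $[-T,0)$. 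The parameters of this reduction index the family $B_{n,k}$.

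\emph{The remaining claims.} Off the origin, $F(\cdot,\tau)$ is built from smooth cutoffs of $|x|^{\alpha}\mathrm{sgn}(x)$ and their Hilbert transforms, which are $C^\infty$ there, so $B(\cdot,t)=\tfrac1{-t}F(\cdot,\log(-t))\in C^\infty(\mathbb{R}\setminus\{0\})$; near $0$ the $|x|^{3+s}\mathrm{sgn}(x)$ tip gives a genuine $C^{3,s}$, non-$C^4$ singularity, so $B(\cdot,t)\in C^{3,s}(\mathbb{R})$; the decay of $F$ gives $B(\cdot,t)\in H^3(\mathbb{R})$ for every $t<0$ (with $\|B(\cdot,t)\|_{H^3}\to\infty$, consistent with the statement). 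Since $\max_x|\partial_x^3 B(\cdot,t)|=\tfrac1{-t}\,\|F'''(\cdot,\log(-t))\|_{L^\infty}$ and $\tau\mapsto\|F'''(\cdot,\tau)\|_{L^\infty}$ is continuous and periodic, hence confined to a compact subinterval of $(0,\infty)$, one gets $\frac1{C|t|}\le\max|\partial_x^3 B(\cdot,t)|\le\frac{C}{|t|}$. Finally, $B_{n,k}$ is not asymptotically self-similar: $(-t)B(\cdot,t)=F(\cdot,\log(-t))$ does not converge as $t\to0^-$ because the bulk of $F$ traverses a nontrivial periodic orbit, and no other modulation $\mu(t)^{-1}B(\lambda(t)\cdot,t)$ can converge either, since the rigid H\"older character of the frozen tip pins the admissible spatial scalings to be essentially constant, reducing any candidate limit to a stationary profile $F_*$ — which the orbit avoids by construction.

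\emph{Main obstacle.} The crux is the construction of the periodic (or merely non-convergent bounded) orbit of $\partial_\tau F=F-2b(HF_x)F_x-b(HF_{xx})F$ inside the admissible H\"older/Sobolev class, together with the remainder estimate that upgrades the reduced-ODE cycle to an exact solution of (\ref{the model}) while preserving the $C^{3,s}\cap H^3$ control and the two-sided rate; by comparison the elimination of $J$, the Hilbert-transform identities, the scaling count, and the regularity bookkeeping are routine once the profile exponent $3+s$, the cutoff structure, and the reduced ODE are pinned down.
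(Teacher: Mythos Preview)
Your route is genuinely different from the paper's, and the step you yourself flag as the ``main obstacle'' is a real gap, not a formality.

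The paper does not pass to self-similar variables and does not look for a periodic orbit. Following the mechanism of C\'ordoba et~al., it builds a \emph{multi-scale cascade}: one writes $B_n=\sum_{k=0}^n B_{n,k}$ with
\[
B_{n,k}(x,t)=x_{n,k}^4(t)\Big(\frac{r}{A}\Big)^{3k}W_{n,k}\!\left(\frac{A^kx}{x_{n,k}(t)r^k},t\right),
\]
where the profiles $W_{n,k}$ are all kept close to a single fixed compactly supported bump $\phi$, and the amplitudes solve the explicit hierarchy $\dot x_{n,k}=x_{n,k}\sum_{j<k}a_j x_{n,j}$, $x_{n,k}(0)=A^k$. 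A bootstrap energy estimate in $\dot H^4$ keeps each $W_{n,k}$ within $\epsilon$ of $\phi$ on a time interval independent of $n,k$; the limit $n\to\infty$ produces the solution. The $1/|t|$ rate and the H\"older exponent $s=(a-\ln A)/(a-\ln r)$ drop out of the ODE lemma governing $x_k(t)$, and the failure of $C^4$ at the origin is caused by the accumulation of infinitely many disjoint bumps at scales $r^k$ --- there is no $|x|^{3+s}$ tip inserted by hand. In particular, $B_{n,k}$ in the statement are the building blocks of one solution, not a family indexed by parameters of a reduced ODE.

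By contrast, your whole construction rests on exhibiting a nontrivial periodic (or bounded non-convergent) orbit of the nonlocal profile flow $\partial_\tau F=F-2b(HF_x)F_x-b(HF_{xx})F$ inside the claimed regularity class. You propose to obtain it by projecting onto an unspecified approximately invariant finite-dimensional family, reducing to a planar ODE, and invoking Poincar\'e--Bendixson or a Hopf bifurcation, then closing a weighted remainder estimate. None of these steps is carried out: the family is not named, the reduced vector field is not computed, no trapping region or bifurcation parameter is identified, and there is no argument for why the infinite-dimensional remainder can be controlled against the destabilizing linear term. This is the content of the proof, and it is absent.

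One further point: your mechanism for non-asymptotic self-similarity is much harder than necessary. The paper disposes of this in a few lines by a dynamic-rescaling calculation showing that for \emph{any} solution, matching to a profile with $c>-\tfrac12$ forces $C_l^{-2}\to -1/(2c+1)<0$, which is impossible. Non-self-similarity is thus a structural feature of the equation and does not need to be engineered through a periodic orbit.
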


\begin{remark}
    At present, we do not have a definition of an asymptotically self-similar solution. We will show that there is no asymptotic self-similar profile for (\ref{the model}).
\end{remark}

\subsection{Technical lemmas}
In this subsection, we collect several technical lemmas that are needed. The proof can be found in \cite{cordoba2023finitetimesingularities3d}. The blow-up mechanism relies on the following ODE system. Interested readers are referred to \cite{cordoba2023finitetimesingularities3d} for more details.
\begin{lemma}\label{the first preparing lemma}
    Let $\lambda>0$, $\delta<1$, and assume that
    \begin{gather*}
        x_n(t)=A^{n},\quad \dot{x}_n(t)=x_n(t)\sum_{j=0}^{n-1}a_j(t)x_j(t)
    \end{gather*}
    where $A<1$ and $\delta\leq a_j$. Let $a>0$ solve $a=A(1-e^{-a}).$ Then we have
    \begin{gather*}
        \ln{A}<a<2(A-1)\quad\text{and}\quad\int_{-a}^0x_n(t)dt\leq \frac{a}{\delta}
    \end{gather*}
\end{lemma}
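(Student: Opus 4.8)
The plan is to treat the statement as two independent claims: the two‑sided scalar bound $\ln A < a < 2(A-1)$ for the root of $a = A(1-e^{-a})$, and the uniform‑in‑$n$ integral bound $\int_{-a}^0 x_n\,dt \le a/\delta$. For the first, a positive root exists only for $A>1$ (which I take to be the intended hypothesis), and I would establish the two inequalities by a sign analysis of $g(a):=A(1-e^{-a})-a$: one has $g(0)=0$ and $g'(a)=Ae^{-a}-1$ changes sign exactly once, at $a=\ln A$, so $g$ has a single positive root; then $g(\ln A)=(A-1)-\ln A>0$ places it to the right of $\ln A$, and $g(2(A-1))<0$ places it to the left of $2(A-1)$. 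The last inequality, after setting $A=1+u$ with $u>0$, is the positivity on $(0,\infty)$ of $h(u):=\tfrac{2u}{1+u}-(1-e^{-2u})$, which follows from $h(0)=0$ together with $h'(u)=\tfrac{2}{(1+u)^2}-2e^{-2u}>0$ (since $e^{u}>1+u$). This is a routine calculus exercise. The parameter $\lambda$ does not enter this lemma; only $a_j\ge\delta$ is used.

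For the integral bound I would read the data as $x_n(0)=A^n$ and integrate the system backwards on $[-a,0]$, where each $x_n$ stays positive and non‑decreasing because $\dot x_n = x_n S_n\ge 0$, and then argue by induction on $n$. The base case is forced: the empty sum gives $\dot x_0=0$, so $x_0\equiv 1$ and $\int_{-a}^0 x_0 = a \le a/\delta$. For the inductive step the key is the telescoping identity $\tfrac{d}{dt}\log(x_n/x_{n-1}) = S_n-S_{n-1} = a_{n-1}x_{n-1}$; integrating it from $t$ to $0$ and using $x_n(0)/x_{n-1}(0)=A$ gives $x_n(t)=A\,x_{n-1}(t)\exp\!\big(-\int_t^0 a_{n-1}x_{n-1}\big)$, after which $a_{n-1}\ge\delta$ yields
\[
\int_{-a}^0 x_n\,dt \;\le\; A\int_{-a}^0 x_{n-1}(t)\,e^{-\delta\psi(t)}\,dt, \qquad \psi(t):=\int_t^0 x_{n-1}(\tau)\,d\tau .
\]
Since $x_{n-1}e^{-\delta\psi}=\tfrac1\delta\tfrac{d}{dt}e^{-\delta\psi}$ with $\psi(0)=0$ and $\psi(-a)=\int_{-a}^0 x_{n-1}=:I_{n-1}$, the right‑hand side equals $\tfrac{A}{\delta}\big(1-e^{-\delta I_{n-1}}\big)$; and if $I_{n-1}\le a/\delta$ then $\delta I_{n-1}\le a$, so by monotonicity of $s\mapsto 1-e^{-s}$ and the defining equation $a=A(1-e^{-a})$ one gets $I_n\le \tfrac{A}{\delta}(1-e^{-a})=a/\delta$, closing the induction.

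The step I expect to require the most thought is recognising precisely this structure: performing the backward integration of the ratio identity, spotting the exact‑derivative form of $x_{n-1}e^{-\delta\psi}$ that produces the factor $1-e^{-\delta I_{n-1}}$, and then seeing that the recursion $I\mapsto \tfrac{A}{\delta}(1-e^{-\delta I})$ has $a/\delta$ as its relevant fixed point — which is exactly what the equation $a=A(1-e^{-a})$ encodes, so the clean constant in the conclusion is forced by the choice of $a$. It is also worth noting that no upper bound on the coefficients $a_j$ is needed, since in the backward picture a larger growth rate only makes the solution smaller in the past, which only helps. Once this is in place the argument is short, the remaining work being the bookkeeping of the induction together with the elementary scalar estimates above.
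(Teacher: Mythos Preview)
The paper does not supply its own proof of this lemma; it is stated as a technical preliminary and the reader is referred to \cite{cordoba2023finitetimesingularities3d}. Your argument is correct once the evident typo in the hypotheses is repaired (a positive root of $a=A(1-e^{-a})$ forces $A>1$, not $A<1$, as you already flag; the later uses of the lemma in Sections~3.1--3.2, where one divides by $A-1$ and needs $\ln A>0$, confirm this reading). The inductive scheme you describe --- integrate $\tfrac{d}{dt}\log(x_n/x_{n-1})=a_{n-1}x_{n-1}$ backward from $0$, bound $a_{n-1}\ge\delta$, recognise $x_{n-1}e^{-\delta\psi}$ as an exact derivative so that $I_n\le \tfrac{A}{\delta}(1-e^{-\delta I_{n-1}})$, and close the induction via the fixed point $a/\delta$ of this map --- is exactly the argument in the cited reference, so your approach coincides with the intended one.
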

\begin{remark}\label{tech remark}
    If $\delta\leq a_j$ becomes $\delta\geq a_j$, then the same argument can be applied to show that the inequality is reversed.
\end{remark}
\begin{lemma}\label{increasing lemma}
    For $n>k$, the ratio $x_n(t)/x_k(t)$ increases in t.
\end{lemma}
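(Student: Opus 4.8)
The plan is to differentiate the ratio directly and read off the sign from the multiplicative structure of the ODE. First I would record that each coordinate stays strictly positive: writing $g_j(t):=\sum_{i=0}^{j-1}a_i(t)x_i(t)$, the equation $\dot x_j(t)=x_j(t)\,g_j(t)$ is linear and homogeneous in $x_j$, so $x_j(t)=x_j(t_0)\exp\!\big(\int_{t_0}^{t}g_j(\tau)\,d\tau\big)$; since the initial value equals $A^{j}$ with $A>0$, we get $x_j(t)>0$ on the whole interval of existence, so in particular every denominator appearing below is nonzero.

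Next I would compute the logarithmic derivative of the ratio. For $n>k$,
\[
\frac{d}{dt}\ln\frac{x_n(t)}{x_k(t)}
=\frac{\dot x_n(t)}{x_n(t)}-\frac{\dot x_k(t)}{x_k(t)}
=\sum_{j=0}^{n-1}a_j(t)x_j(t)-\sum_{j=0}^{k-1}a_j(t)x_j(t)
=\sum_{j=k}^{n-1}a_j(t)\,x_j(t).
\]
Because $n>k$, the index set $\{k,\dots,n-1\}$ is nonempty; by hypothesis $a_j(t)\ge\delta$ and, by the previous step, $x_j(t)>0$, so the right-hand side is bounded below by $\delta\sum_{j=k}^{n-1}x_j(t)$, which is positive when $\delta>0$ and nonnegative in any case. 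Hence $t\mapsto\ln\big(x_n(t)/x_k(t)\big)$ is nondecreasing, and therefore so is $x_n(t)/x_k(t)$ — strictly increasing when $\delta>0$ — which is exactly the assertion.

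I do not expect a genuine obstacle here: the computation is elementary, and the only points to handle with care are (i) establishing positivity of the $x_j$ so that the logarithmic derivative is legitimate, and (ii) keeping the direction of the inequality $a_j\ge\delta$ straight so that the telescoped sum is bounded from below. In the companion situation $\delta\ge a_j$ of Remark~\ref{tech remark}, the same identity with the inequality reversed shows the ratio is nonincreasing, which is how this lemma and that remark get used in tandem in the blow-up construction.
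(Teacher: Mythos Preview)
Your proof is correct; the paper itself does not prove this lemma but defers to \cite{cordoba2023finitetimesingularities3d}, and the argument there is precisely the logarithmic-derivative computation you carry out. The only imprecision is the aside ``nonnegative in any case'': from $a_j\ge\delta$ alone this does not follow unless $\delta\ge0$, but in the intended application one has $\delta>0$ (cf.\ Lemma~\ref{control aj}), so your strict-monotonicity conclusion is exactly what is used downstream.
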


\subsection{Outline of the paper}
The rest of the paper is organized as follows. Section 2 starts with giving the form of blow-up followed by showing local existence, and then gives the solution as the limit of a series. Section 3 shows several properties of the solution.

\section{Construction of the solution}
\subsection{The profile}
In this section, we consider the following profile for $B$.
Let
\begin{align*}
    B_{n,k}(x,t)=x_{n,k}^c(t)\left(\frac{r}{A}\right)^{dk}W_{n,k}\left(\frac{A^kx}{x_{n,k}(t)r^k},t\right)
\end{align*}
where $c,d$ are some numbers and $x_{n,k}:\mathbb{R}\rightarrow\mathbb{R}$ satisfies some ODE. Then note that
\begin{align*}
    &\partial_tB_{n,k}\\
    =&cx_{n,k}^{c-1}\dot{x}_{n,k}(t)\left(\frac{r}{A}\right)^{dk}W_{n,k}\left(\frac{A^kx}{x_{n,k}r^k},t\right)-x_{n,k}^{c-2}(t)\left(\frac{r}{A}\right)^{(d-1)k}x\partial_xW_{n,k}\left(\frac{A^kx}{x_{n,k}r^k},t\right)+\\
    &+x_{n,k}^c(t)\left(\frac{r}{A}\right)^{dk}\partial_tW_{n,k}\left(\frac{A^kx}{x_{n,k}r^k},t\right)
\end{align*}
Denote
\begin{align*}
    B_n=\sum_{k=0}^{n}B_{n,k}
\end{align*}
and require $B_n$ to be a solution to (\ref{the model}). Then we get the PDE for the profile $W_{n,k}$, which reads
\begin{align}
    \partial_tW_{n,k}=&\left(2b\frac{1}{x_{n,k}}\left(\frac{A}{r}\right)^k\partial_xHB_{n}\left(xx_{n,k}(t)\left(\frac{r}{A}\right)^k,t\right)+\frac{x\dot{x}_{n,k}}{x_{n,k}}\right)\partial_xW_{n,k}+\notag\\
    &+\left(b\partial_x^2HB_{n}\left(xx_{n,k}(t)\left(\frac{r}{A}\right)^k,t\right)-c\frac{\dot{x}_{n,k}}{x_{n,k}}\right)W_{n,k}
\end{align}
By the standard energy estimate, we have that for $N\in\mathbb{N}$
\begin{align*}
    \frac{1}{2}\frac{d}{dt}\|W_{n,k}\|_{\dot{H}^N}^2=-\left(\frac{1}{2}+c\right)\frac{\dot{x}_{n,k}}{x_{n,k}}\|W_{n,k}\|_{H^N}^2,
\end{align*}
so the local wellposedness is trivially obtained both forward in time and backward in time for proper $x_k(t)$.

\subsection{The form of blow-up}
Inspired by the novel blow-up mechanism in \cite{cordoba2023finitetimesingularities3d}, we plan to construct a solution $B$ of the form
\begin{align}
    B(x,t)=\lim_{n\to\infty}\sum_{k=0}^{n}B_{n,k}(x,t),
\end{align}
where we require
\begin{align}\label{eq for omega nk}
    \partial_tB_{n,k}=-2bJ_{n}\partial_xB_{n,k}+bH\partial_x^2B_nB_{n,k}.
\end{align}
Now we consider the data
$$B_n(x,0):=\sum_{k=k}^{n}B_{n,k}(x,0),\quad B_{n,k}(x,0)=r^{3k}A^k\phi\left(\frac{x}{r^k}\right)$$
and define
\begin{align}\label{solution series}
    B_n(x,t)=\sum_{k=0}^{n}B_{n,k}(x,t),
\end{align}
so $B_n$ is a solution to
\begin{align}\label{aucxilly equation}
    \partial_tB_{n}=-2J_{n}\partial_xB_{n}+B_{n}H\partial_x^2B_n.
\end{align}
We denote
$$B_-(x,t):=\sum_{j<k}B_{n,j}(x,t)\quad\text{and}\quad B_+(x,t):=\sum_{j>k}B_{n,j}(x,t)$$\newline
Now we introduce the relation between $B_{n,k}(x,t)$ and its profile. Let
\begin{align}\label{def omega n}
    &B_{n,k}(x,t):=x_{n,k}^{4}(t)\left(\frac{r}{A}\right)^{3k}W_{n,k}\left(\frac{A^{k}x}{x_{n,k}(t)r^k},t\right)
\end{align}
so
$$W_{n,k}(x,t)=\frac{A^{3k}B_{n,k}\left(\frac{x_{n,k}(t)r^kx}{A^{k}},t\right)}{r^{3k}x_{n,k}^{4}(t)}$$
where we take the initial data
$$x_{n,k}(0)=A^{k},$$
so
$$\quad W_{n,k}(x,0)=\frac{1}{r^{3k}A^k}B_{n,k}\left(xr^k,0\right)=\phi(x).$$
Note that (\ref{eq for omega nk}) and (\ref{def omega n}) together give the evolution for $W_{n,k}(t)$ which reads
\begin{align}
    \partial_tW_{n,k}=&\left(2b\frac{1}{x_{n,k}}\left(\frac{A}{r}\right)^k\partial_xHB_{n}\left(xx_{n,k}(t)\left(\frac{r}{A}\right)^k,t\right)+\frac{x\dot{x}_{n,k}}{x_{n,k}}\right)\partial_xW_{n,k}+\notag\\
    &+\left(b\partial_x^2HB_{n}\left(xx_{n,k}(t)\left(\frac{r}{A}\right)^k,t\right)-4\frac{\dot{x}_{n,k}}{x_{n,k}}\right)W_{n,k}
\end{align}
We define
\begin{align}\label{at 0 is 0}
    \dot{x}_{n,k}(t)&=x_{n,k}(t)H\partial_x^2B_-(0,t)\\
    &:=x_{n,k}(t)\sum_{j=0}^{k-1}H\partial_x^2W_{n,j}(0,t)x_{n,j}(t)\notag.
\end{align}
Passing to the limit $n\to\infty$ for (\ref{solution series}), we will have the unbounded (in $C^3(\mathbb{R})$) data
\begin{align*}
    B(x,0)=\sum_{k=0}^{\infty}\left(r^3A\right)^kW_k(x/r^k,0),
\end{align*}
where $W_k$ is defined to be the limit of the sequence $\{W_{n,k}\}_{n\geq1}$ in some sense. Since there is no a prior local well-posedness result for the unbounded data, we plan to start from bounded data
\begin{align*}
    B_{n,k}(x,0)=\sum_{k=0}^{n}(r^3A)^{k}W_{n,k}(x/r^k,0)
\end{align*}
and use the evolution equation of $W_{n,k}(x,t)$ to show a lifespan of $W_{n,k}(x,t)$ independent of $n.$
\subsection{bootstrapping ansatz}
We now state the bootstrapping assumptions: for $t\in[-T,0]$ and $k=0,\dots,n$,
\begin{enumerate}
    \item $W_{n,k}(\cdot,t)=0$ outside
    $[-1-2r,-1+2r]\cup[1-2r,1+2r].$
    \item $\|W_{n,k}(\cdot,t)-\phi\|_{\dot{H}^{4}}\leq\epsilon$, where $\epsilon>0$ is small.
\end{enumerate}
\subsection{Estimates}
\begin{lemma}\label{bootstrap assumptions implication}
    The bootstrapping assumptions yield
    \begin{align*}
        \|W_{n,k}-\phi\|_{L^1}\leq512r^4\sqrt{2r}\epsilon
    \end{align*}
\end{lemma}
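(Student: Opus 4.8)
The plan is to convert the $\dot{H}^{4}$ control from bootstrapping assumption (2) into an $L^2$ estimate by an iterated Poincar\'e inequality on the short intervals provided by bootstrapping assumption (1), and then pass from $L^2$ to $L^1$ via Cauchy--Schwarz, using that the support of $W_{n,k}-\phi$ has small measure.

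First I would set $f:=W_{n,k}(\cdot,t)-\phi$. Since $\phi=W_{n,k}(\cdot,0)$ and bootstrapping assumption (1) holds for every $t\in[-T,0]$, in particular at $t=0$, both $W_{n,k}(\cdot,t)$ and $\phi$ vanish outside $S:=[-1-2r,-1+2r]\cup[1-2r,1+2r]=:I_-\cup I_+$, so $f$ is supported in $S$, which is a union of two intervals of length $\ell=4r$ with $|S|=8r$. Moreover $f\in H^4(\mathbb{R})\hookrightarrow C^3(\mathbb{R})$, and since $f$ vanishes on open neighbourhoods of the four endpoints of $I_{\pm}$ (here $r$ is small, so the two intervals are disjoint), the functions $f,\partial_xf,\partial_x^2f,\partial_x^3f$ all vanish at those endpoints.

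Next I would run the Poincar\'e iteration. On an interval $I$ of length $\ell$, if $g$ vanishes at an endpoint $a$ of $I$, then $g(x)=\int_a^x\partial_xg(s)\,ds$, and Cauchy--Schwarz gives $\|g\|_{L^2(I)}\le\ell\|\partial_xg\|_{L^2(I)}$. Applying this in turn to $g=f,\partial_xf,\partial_x^2f,\partial_x^3f$ yields
\[
\|f\|_{L^2(I)}\le\ell^4\|\partial_x^4f\|_{L^2(I)}=(4r)^4\|\partial_x^4f\|_{L^2(I)}=256\,r^4\,\|\partial_x^4f\|_{L^2(I)}.
\]
Squaring, summing over $I\in\{I_-,I_+\}$, and invoking $\|\partial_x^4f\|_{L^2(\mathbb{R})}=\|W_{n,k}-\phi\|_{\dot{H}^{4}}\le\epsilon$ gives $\|f\|_{L^2(\mathbb{R})}\le256\,r^4\epsilon$. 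Finally, Cauchy--Schwarz on the support gives
\[
\|W_{n,k}-\phi\|_{L^1}=\|f\|_{L^1(S)}\le|S|^{1/2}\|f\|_{L^2(\mathbb{R})}\le\sqrt{8r}\cdot256\,r^4\epsilon=512\,r^4\sqrt{2r}\,\epsilon,
\]
which is the claimed bound.

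I do not expect a real obstacle; this is essentially an iterated Poincar\'e inequality on a short interval followed by Cauchy--Schwarz. The one point that deserves a sentence of justification is the vanishing of $f$ and its first three derivatives at the endpoints of $I_{\pm}$, which follows from $f\in C^3(\mathbb{R})$ together with $f\equiv0$ outside $S$ (here one uses that $W_{n,k}$ and $\phi$ lie in $H^4$, e.g.\ from compact support together with $\partial_x^4 f\in L^2$). One could equivalently route through an $L^{\infty}$ bound, $\|f\|_{L^{\infty}(I)}\le\ell^{7/2}\|\partial_x^4f\|_{L^2(I)}$ followed by $\|f\|_{L^1}\le|S|\,\|f\|_{L^{\infty}}$ after summing; the constant comes out the same.
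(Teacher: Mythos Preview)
Your proof is correct and follows essentially the same approach as the paper: iterated Poincar\'e on the two intervals of length $4r$ followed by Cauchy--Schwarz on the support of measure $8r$. The only cosmetic difference is that the paper iterates Poincar\'e in $L^1$ and then applies Cauchy--Schwarz to $\partial_x^4 f$, whereas you iterate in $L^2$ and apply Cauchy--Schwarz to $f$ itself; both orderings yield the identical constant $512\,r^4\sqrt{2r}$.
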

\begin{proof}
    Let $f_k=W_{n,k}-\phi_k$. Then by the Poincar\'e inequality,
    \begin{align*}
        \|f\|_{L^1}\leq(4r)^4\|\partial_x^4f\|_{L^1}\leq2^9r^4\sqrt{2r}\|\partial_x^4f\|_{L^{2}}\leq C\epsilon.
    \end{align*}
\end{proof}
\begin{lemma}\label{control aj}
    Lemma \ref{bootstrap assumptions implication} immediately implies $$|\partial_x^2HW_{n,k}(0,t)|\sim H\phi''(0).$$
\end{lemma}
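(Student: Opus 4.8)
The plan is to regard $W_{n,k}(\cdot,t)$ as a small perturbation of the fixed profile $\phi$ and to exploit that the perturbation lives away from the origin. Set $f_k:=W_{n,k}(\cdot,t)-\phi$; by bootstrap assumption (2) we have $\|f_k\|_{\dot H^4}\le\epsilon$, and by bootstrap assumption (1) $\operatorname{supp} f_k\subset[-1-2r,-1+2r]\cup[1-2r,1+2r]$, a set whose distance from $0$ is at least $1-2r>0$ once $r<\tfrac12$. Since the Hilbert transform commutes with $\partial_x$,
\[
\partial_x^2 H W_{n,k}(0,t)=H\phi''(0)+\bigl(Hf_k''\bigr)(0),
\]
and because $H\partial_x^2\phi(0)=\partial_x^2H\phi(0)=H\phi''(0)$ is the same constant appearing on the right-hand side, it suffices to show that the error term $\bigl(Hf_k''\bigr)(0)$ is small compared with $H\phi''(0)$, uniformly in $n$, $k$ and $t\in[-T,0]$.

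The key step is the elementary observation that a principal-value singular integral evaluated off the support of its argument is a benign, smoothing operator. Since $0\notin\operatorname{supp} f_k$, the principal value defining $\bigl(Hf_k''\bigr)(0)$ is an absolutely convergent integral of $f_k''$ against the kernel $y\mapsto-\tfrac1{\pi y}$, which is smooth on $\operatorname{supp} f_k$. I would then integrate by parts twice (all boundary terms vanish because $f_k$ is compactly supported inside that set), obtaining
\[
\bigl(Hf_k''\bigr)(0)=-\frac1\pi\int\frac{f_k''(y)}{y}\,dy=-\frac{2}{\pi}\int\frac{f_k(y)}{y^3}\,dy,
\]
so that $\bigl|\bigl(Hf_k''\bigr)(0)\bigr|\le\frac{2}{\pi(1-2r)^3}\|f_k\|_{L^1}$. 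By Lemma \ref{bootstrap assumptions implication} this is at most $Cr^{9/2}\epsilon$ for a universal constant $C$, which is the desired smallness.

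Finally, with the profile $\phi$ chosen (in the forthcoming subsection) so that $H\phi''(0)\neq0$ — say $H\phi''(0)>0$ — one picks $\epsilon$ (and, if necessary, $r$) small enough that $Cr^{9/2}\epsilon\le\tfrac12 H\phi''(0)$, whence $\tfrac12 H\phi''(0)\le\partial_x^2 H W_{n,k}(0,t)\le\tfrac32 H\phi''(0)$ for all admissible $n,k,t$; this is precisely the claimed comparability $|\partial_x^2HW_{n,k}(0,t)|\sim H\phi''(0)$, together with the two-sided bound on the coefficients $a_j(t)=H\partial_x^2 W_{n,j}(0,t)$ needed to apply Lemma \ref{the first preparing lemma} and Remark \ref{tech remark}. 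I do not expect a genuine obstacle here: the only point requiring care is that the support of $f_k$ stay uniformly separated from the origin — which is exactly what bootstrap assumption (1) guarantees and what legitimizes both the integration by parts and the kernel bound — and that $\phi$ be normalized so that $H\phi''(0)$ is a definite nonzero constant.
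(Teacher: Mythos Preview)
Your proposal is correct and follows essentially the same approach as the paper: write $\partial_x^2HW_{n,k}(0,t)-H\phi''(0)=(Hf_k'')(0)$, use that $\operatorname{supp} f_k$ is separated from the origin by at least $1-2r$ to replace the Hilbert kernel by the smooth kernel $y^{-3}$ (via integration by parts), and then bound the resulting integral by $\frac{C}{(1-2r)^3}\|f_k\|_{L^1}\le C'\epsilon$ using Lemma~\ref{bootstrap assumptions implication}. The paper's proof is just a terser version of exactly this computation.
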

\begin{proof}
    We have
    \begin{align*}
        &|H\partial_x^2W_{n,k}(0)-H\phi_k''(0)|\\
        =&\left|\int_{\text{supp}f_k}\frac{W_{n,k}(y)-\phi(y)}{y^3}dy\right|\\
        \leq&\frac{C}{(1-2r)^3}\epsilon
    \end{align*}
\end{proof}
Now we estimate $\partial_x^2HB_{n,k}$, $\partial_x^2HB_{-}$, and $\partial_x^2HB_{+}$.
\begin{lemma}\label{for closing bootstrap}
    There are constants $C_i(r,\epsilon)$ ($i=1,\cdots,8$) such that for $A\in(\frac{1}{2},1)$, and $x\in\text{supp}\,B_{n,k}(\cdot,t)$, namely, $(|x|/x_{n,k}(t))$$(A/r)^k\in[1-2r,1+2r]$,
    \begin{align*}
        \left|\partial_xHB_-(x,t)\right|
        &\leq C_1(r,\epsilon)x_{n,k}(t)\left(\frac{r}{A}\right)^k\\
        \left|\partial_x^2HB_-(x,t)\right|
        &\leq C_2(r,\epsilon)\\
        \left|\partial_x^3HB_-(x,t)\right|&\leq C_3 A^k\\
        \left|\partial_x^4HB_-(x,t)\right|&\leq C_4 \left(\frac{A}{r}\right)^k\\
        \left|\partial_x^5HB_{-}(x,t)\right|&\leq C_5(r,\epsilon)\left(\frac{A}{r}\right)^{2k}\frac{1}{x_{n,k}(t)}\\
        \left|\partial_x^5HB_+(x,t)\right|&\leq C_6(r,\epsilon)\left(\frac{A}{r}\right)^{2k}\frac{1}{x_{n,k}(t)}\\
        \left|\partial_x^4HB_+(x,t)\right|&\leq C_7(r,\epsilon)\left(\frac{A}{r}\right)^k\\
        \left|\partial_x^3HB_{+}(x,t)\right|&\leq C_8(r,\epsilon)A^k\\
        |\partial_x^2HB_{+}(x,t)|&\leq C_9(r,\epsilon)\\
        |\partial_xHB_{+}(x,t)|&\leq C_{10}(r,\epsilon)(Ar)^{k}
    \end{align*}
\end{lemma}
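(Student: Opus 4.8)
The proof of Lemma~\ref{for closing bootstrap} will be a computation that tracks how each derivative of the Hilbert transform scales with $k$ when evaluated on the support of $B_{n,k}$. The plan is to write $HB_- = \sum_{j<k} HB_{n,j}$ and, using the definition~\eqref{def omega n}, express each summand in terms of the profile $W_{n,j}$. Differentiating the relation $B_{n,j}(x,t) = x_{n,j}^4(t)(r/A)^{3j}W_{n,j}(A^j x/(x_{n,j}(t) r^j),t)$ and commuting $\partial_x^m$ with the Hilbert transform, each derivative of $HB_{n,j}$ picks up a factor $(A^j/(x_{n,j} r^j))^m$ relative to $(\partial_x^m H W_{n,j})$ evaluated at the rescaled point. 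Since we only care about the magnitude of $x_{n,j}$, and the ODE~\eqref{at 0 is 0} together with Lemma~\ref{the first preparing lemma} pins down $x_{n,j}(t) \sim A^j$ up to constants on the interval $[-T,0]$, we obtain that $\partial_x^m H B_{n,j}(x,t)$ is of size $(A/r)^{j m}\cdot x_{n,j}^{1-m}\cdot(r/A)^{3j}\cdot\|\partial_x^m H W_{n,j}\|_{L^\infty}$ at scale $j$, and we then resum over $j$.

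The key point is that the geometric series in $j$ converges and is dominated by its largest term. For the $B_-$ bounds the relevant index range is $j < k$, so one rescales everything to the $k$-th scale: the bound on $\partial_x^m H B_-(x,t)$ for $x$ in $\mathrm{supp}\,B_{n,k}$ should come out to a constant times $x_{n,k}(t)^{1-m+\text{(shift)}}(A/r)^{\text{(power)}\,k}$, where the precise powers are read off from the scaling weights $c=4$, $d=3$, the factor $r^k$ in the spatial rescaling, and the number of derivatives $m$. The pattern visible in the statement — each additional derivative multiplies the bound by roughly $(A/r)^k/x_{n,k}$, moving from $x_{n,k}(r/A)^k$ at $m=1$ up to $(A/r)^{2k}/x_{n,k}$ at $m=5$ — is exactly what this scaling analysis produces, and the ratios $r/A < 1$, $A < 1$ guarantee the tail sums are controlled by $C(r,\epsilon)$ depending only on $r$ and $\epsilon$ (through the bootstrap assumption $\|W_{n,k}-\phi\|_{\dot H^4}\le\epsilon$, which bounds $\|W_{n,k}\|_{\dot H^4}$ and hence, via Sobolev embedding on the fixed-size support, the lower-order $L^\infty$ norms of $HW_{n,k}$ and its derivatives up to order roughly $3$; the order-$4$ and order-$5$ bounds require the $L^2$ control directly and the kernel estimate for $H$ away from the singularity, as in Lemma~\ref{control aj}). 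For the $B_+$ bounds one instead has $j > k$ and rescales to the $k$-th scale from above; now the small parameter in the geometric series is $(r/A)$ raised to a positive power times the derivative count, and again the sum is dominated by the $j = k+1$ term, giving the stated $(Ar)^k$, $A^k$, $(A/r)^k$, $(A/r)^{2k}/x_{n,k}$ hierarchy.

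I would organize the proof by first establishing the single-scale estimate $|\partial_x^m H B_{n,j}(x,t)| \le C(r,\epsilon,m)\,x_{n,j}(t)^{1-m}(A/r)^{(m-?)j}$ as a lemma (being careful with the exact exponent, which I would extract by plugging~\eqref{def omega n} into $\partial_x^m H(\cdot)$ and using that $H$ commutes with dilations up to the obvious Jacobian), then splitting into the $j<k$ and $j>k$ regimes and summing. The estimates for $\partial_x H W_{n,j}$, $\partial_x^2 H W_{n,j}$, $\partial_x^3 H W_{n,j}$ on the support follow from the bootstrap assumption plus the fact that $\phi$ is a fixed smooth bump supported in $[1-2r,1+2r]$ (up to the reflection), while for the fourth and fifth derivatives I would use that $\partial_x^4 W_{n,j}$ is controlled in $L^2$ and that, for $x$ in the support and the singularity of the Hilbert kernel sitting at the origin, $\partial_x^m H W_{n,j}(x) = c_m\int W_{n,j}(y)/(x-y)^{m+1}\,dy$ with $|x-y|\gtrsim 1-2r$, so the integral is bounded by $\|W_{n,j}\|_{L^1}\le\|W_{n,j}\|_{L^2}\lesssim 1$. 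The main obstacle I anticipate is bookkeeping: getting every exponent of $A$, $r$, and $x_{n,k}$ exactly right through the chain-rule differentiation and the resummation, and verifying that when $x$ lies in $\mathrm{supp}\,B_{n,k}$ the rescaled evaluation point $x x_{n,k}(t)(r/A)^k \cdot (A^j/(x_{n,j}(t)r^j))$ indeed lands in the support of $W_{n,j}$ (or far from the Hilbert singularity) for every $j$ in the relevant range — this is where one needs $x_{n,j}\sim A^j$ uniformly, which is precisely what Lemma~\ref{the first preparing lemma} and Lemma~\ref{increasing lemma} supply, and where the monotonicity of $x_{n,j}/x_{n,k}$ is used to make the estimate uniform in $t\in[-T,0]$.
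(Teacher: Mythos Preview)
Your overall strategy matches the paper's: rescale each $B_{n,j}$ via \eqref{def omega n}, bound $\partial_x^m HW_{n,j}$ pointwise at the rescaled argument using the kernel representation, collect the scaling factors $x_{n,j}^{4-m}(r/A)^{(3-m)j}$, and sum geometrically in $j$ using the monotonicity of $x_{n,k}/x_{n,j}$ from Lemma~\ref{increasing lemma}. Two points in your write-up are muddled and, if taken literally, would cause trouble.

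First, the rescaled evaluation point does \emph{not} land in the support of $W_{n,j}$; it lands strictly outside. For $j<k$ one checks (using $x_{n,k}/x_{n,j}\le A^{k-j}$ for $t\le 0$) that $A^j|x|/(x_{n,j}r^j)\in[0,(1+2r)r^{k-j}]\subset[0,1-2r]$, while for $j>k$ the same point lies in $[(1-2r)r^{k-j},\infty)\subset[1+2r,\infty)$. This is the key geometric fact: the Hilbert kernel is nonsingular there, so
\[
\partial_\xi^m HW_{n,j}(\xi,t)=\frac{c_m}{\pi}\int_{\mathrm{supp}\,W_{n,j}}\frac{W_{n,j}(y,t)}{(\xi-y)^{m+1}}\,dy
\]
with $|\xi-y|$ bounded below by a constant depending only on $r$.

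Second, this kernel bound handles \emph{every} derivative order uniformly and needs only $\|W_{n,j}\|_{L^1}$, which Lemma~\ref{bootstrap assumptions implication} supplies. Your proposed split --- Sobolev embedding for $m\le 3$, kernel estimate for $m\ge 4$ --- is unnecessary; worse, if you actually tried to bound $\|\partial_x^m HW_{n,j}\|_{L^\infty}$ \emph{on} the support for $m=4,5$ you would need more regularity on $W_{n,j}$ than the bootstrap provides. The paper simply runs the same $L^1$-against-smooth-kernel estimate for all $m$ in both regimes and then sums.
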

\begin{proof}
    By Lemma \ref{bootstrap assumptions implication}, we have
    \begin{align*}
        \|W_{n,k}\|_{L^1}\leq\|\phi\|_{L^1}+C\epsilon.
    \end{align*}
    Then for $x\in[0,1-2r]$, we have
    \begin{align*}
        &\partial_x^N(HW_{n,k}(x,t))\\
        \leq&\frac{1}{\pi}\left(\int_{\text{supp}(W_{n,k})}\frac{|W_{n,k}(y)|}{\pi|(x-y)|^{N+1}}dy\right)\\
        \leq&\int_{\text{supp}(W_{n,k})}\left|\frac{W_{n,k}(y)}{\pi|1-2r-x|^{N+1}}\right|dy\\
        \leq&\frac{\|\phi\|_{L^1}+C\epsilon}{\pi|1-2r-x|^{N+1}}.
    \end{align*}
    For $j<k$ and
    $$|x|\in\left[(1-2r)|x_{n,k}|\left(\frac{r}{A}\right)^{k},(1+2r)|x_{n,k}|\left(\frac{r}{A}\right)^{k}\right],$$
    we observe that
    \begin{align*}
        \frac{A^{j}|x|}{x_{n,j}r^j}
        &\in\left[(1-2r)\frac{x_{n,k}}{x_{n,j}}\left(\frac{r}{A}\right)^{k-j},(1+2r)\frac{x_{n,k}}{x_{n,j}}\left(\frac{r}{A}\right)^{k-j}\right]\\
        &\subset\left[0,(1+2r)r^{k-j}\right]\\
        &\subset[0,(1+2r)r]\\
        &\subset[0,1-2r].
    \end{align*}
    So we have
    \begin{align}
        H\partial_x^NB_{n,j}(x,t)
        &=x_{n,j}^{4-N}(t)\left(\frac{r}{A}\right)^{(3-N)j}\partial_x^NHW_{n,j}\left(\frac{A^{j}x}{x_{n,j}r^j},t\right)\notag\\
        &\leq\left(r^{3-N}A\right)^{j}\frac{\|\phi\|_{L^1}+C\epsilon}{\pi\left|1-2r-(1+2r)r\right|^4}\label{to be summed}
    \end{align}
    Summing (\ref{to be summed}) over $j=0,1,\dots,k-1$, we get
    \begin{align}\label{the above inequality}
        H\partial_x^2B_-(x,t)
        &\leq C_2(r,\epsilon)\\
        H\partial_x^3B_-(x,t)&\leq C_3 A^k\\
        H\partial_x^4B_-(x,t)&\leq C_4 \left(\frac{A}{r}\right)^k
    \end{align}
    Integrating from 0 to $|x|\leq(1+2r)x_{n,k}(t)\left(\frac{r}{A}\right)^k$ and using the fact that $B_{n,j}$ are odd yield the first inequality.
    For $N=5$, we note that
    \begin{align*}
        \partial_x^5HB_{n,j}(x,t)
        &=\frac{1}{x_{n,j}(t)}\left(\frac{A}{r}\right)^{2j}\partial_x^5HW_{n,j}\left(\frac{A^jx}{x_{n,j}r^j},t\right)\\
        &=\frac{x_{n,k}(t)}{x_{n,j}(t)}\left(\frac{A}{r}\right)^{2j}\partial_x^5HW_{n,j}\left(\frac{A^jx}{x_{n,j}r^j},t\right)\frac{1}{x_{n,k}(t)}\\
        &\leq A^k\left(\frac{A}{r^2}\right)^j\partial_x^5HW_{n,j}\left(\frac{A^jx}{x_{n,j}r^j},t\right)\frac{1}{x_{n,k}(t)},
    \end{align*}
    summing over $j=0,\cdots,k-1$ yields
    \begin{align*}
        \partial_x^5HW_{-}(x,t)&\leq C_5(r,\epsilon)\left(\frac{A}{r}\right)^{2k}\frac{1}{x_{n,k}(t)}
    \end{align*}
    For $|x|>1+2r$, we have that
    \begin{align}
        &HW_{n,j}(x,t)\notag\\
        =&\frac{1}{\pi}\int_0^x\partial_yHW_{n,j}(y,t)dy\notag\\
        =&\frac{1}{2\pi}\int_{-x}^x\int_{\text{supp}W_{n,j}}\frac{HW_{n,j}(z,t)}{(y-z)^2}dzdy\notag\\
        =&\frac{1}{2\pi}\int_{\text{supp}W_{n,j}}HW_{n,j}(z,t)\left(\frac{1}{x-z}-\frac{1}{-x-z}\right)dz\label{inductively}\\
        \leq&\frac{\|\phi\|_{L^1}+C\epsilon}{2\pi}\left(\frac{1}{|x|-1-2r}-\frac{1}{|x|+1+2r}\right)\notag
    \end{align}
    By (\ref{inductively}), we have
    \begin{align*}
        \partial_xHW_{n,j}(x,t)&\leq\frac{2\left(\|\phi\|_{L^1}+C\epsilon\right)(1+2r)|x|}{\pi(|x|^2-(1+2r)^2)^2}\\
        \partial_x^2HW_{n,j}(x,t)&\leq\frac{3\left(\|\phi\|_{L^1}+C\epsilon\right)(1+2r)|x|^2}{\pi(|x|^2-(1+2r)^2)^3}\\
        \partial_x^3HW_{n,j}(x,t)&\leq\frac{4\left(\|\phi\|_{L^1}+C\epsilon\right)(1+2r)|x|^3}{\pi(|x|^2-(1+2r)^2)^4}\\
        \partial_x^4HW_{n,j}(x,t)&\leq\frac{5\left(\|\phi\|_{L^1}+C\epsilon\right)(1+2r)|x|^4}{\pi(|x|^2-(1+2r)^2)^5}\\
        \partial_x^5HW_{n,j}(x,t)&\leq\frac{6\left(\|\phi\|_{L^1}+C\epsilon\right)(1+2r)|x|^5}{\pi(|x|^2-(1+2r)^2)^6}
    \end{align*}
    For $j>k$ and
    \begin{align*}
        |x|\in\left[(1-2r)|x_{n,k}|\left(\frac{r}{A}\right)^{k},(1+2r)|x_{n,k}|\left(\frac{r}{A}\right)^{k}\right]
    \end{align*}
    we observe that
    \begin{align*}
        \frac{A^{j}|x|}{x_{n,j}r^j}
        &\in\left[(1-2r)\frac{x_{n,k}}{x_{n,j}}\left(\frac{r}{A}\right)^{k-j},(1+2r)\frac{x_{n,k}}{x_{n,j}}\left(\frac{r}{A}\right)^{k-j}\right]\\
        &\subset\left[(1-2r)r^{k-j},\infty\right)\\
        &\subset[(1-2r)r^{-1},\infty]\\
        &\subset[1+2r,\infty],
    \end{align*}
    and that
    \begin{align*}
        \frac{A^j|x|}{x_{n,j}r^j}\geq\frac{(1-2r)A^{j-k}x_{n,k}}{x_{n,j}r^{j-k}}\geq(1-2r)r^{k-j},
    \end{align*}
    which implies
    \begin{align*}
        \left(\frac{A^j|x|}{x_{n,j}r^j}\right)^2-(1+2r)^2\geq\frac{7}{16}\left(\frac{A^j|x|}{x_{n,j}r^j}\right)^2\geq\frac{7}{16}(1-2r)^2r^{2(k-j)}
    \end{align*}
    so we have, for example when $N=4$
    \begin{align*}
        &|\partial_x^4H\omega_{n,j}(x,t)|\\
        =&\left(\frac{A}{r}\right)^j\left|\partial_x^4HW_{n,j}\left(\frac{A^jx}{x_{n,j}r^j},t\right)\right|\\
        \leq&\left(\frac{A}{r}\right)^j\frac{4\left(\|\phi\|_{L^1}+C\epsilon\right)(1+2r)\left(\frac{16}{7}\right)^{\frac{3}{2}}\left|\left(\frac{A^j|x|}{x_{n,j}r^j}\right)^2-(1+2r)^2\right|^{\frac{3}{2}}}{\pi\left(\left(\frac{A^j|x|}{x_{n,j}r^j}\right)^2-(1+2r)^2\right)^4}\\
        =&\left(\frac{A}{r}\right)^j\frac{4\left(\|\phi\|_{L^1}+C\epsilon\right)(1+2r)\left(\frac{16}{7}\right)^{\frac{3}{2}}}{\pi\left(\left(\frac{A^j|x|}{x_{n,j}r^j}\right)^2-(1+2r)^2\right)^{\frac{5}{2}}}\\
        \leq&\left(\frac{A}{r}\right)^j\frac{4\left(\|\phi\|_{L^1}+C\epsilon\right)(1+2r)\left(\frac{16}{7}\right)^{\frac{3}{2}}}{\pi\left(\frac{7}{16}(1-2r)^2r^{2(k-j)}\right)^{\frac{5}{2}}}\\
        =&\left(\frac{A}{r}\right)^j\frac{4\left(\|\phi\|_{L^1}+C\epsilon\right)(1+2r)\left(\frac{16}{7}\right)^{4}}{\pi\left((1-2r)^2\right)^{\frac{5}{2}}}r^{5(j-k)},\\
    \end{align*}
    summing over $j\geq k+1$ yields
    \begin{align*}
        \left|\partial_x^4H\omega_{+}(x,t)\right|\leq\frac{4\left(\|\phi\|_{L^1}+C\epsilon\right)(1+2r)\left(\frac{16}{7}\right)^{4}Ar^4}{\pi\left((1-2r)^2\right)^{\frac{5}{2}}(1-Ar^4)}\left(\frac{A}{r}\right)^k:=C_9(r,\epsilon)\left(\frac{A}{r}\right)^k.
    \end{align*}
    The cases corresponding to other values of $N$ are estimated similarly so we omit the details.
\end{proof}
\subsection{Evolution of $W_{n,k}$}
In this subsection, we show that $W_{n,k}$ is close to $\phi$ in 
$\dot{H}^4$. Recall that the evolution of $W_{n,k}$ is given by
\begin{align*}
    \partial_tW_{n,k}=&\left(2b\frac{1}{x_{n,k}}\left(\frac{A}{r}\right)^k\partial_xHB_{n}\left(xx_{n,k}(t)\left(\frac{r}{A}\right)^k,t\right)+\frac{x\dot{x}_{n,k}}{x_{n,k}}\right)\partial_xW_{n,k}+\notag\\
    &+\left(b\partial_x^2HB_{n}\left(xx_{n,k}(t)\left(\frac{r}{A}\right)^k,t\right)-3\frac{\dot{x}_{n,k}}{x_{n,k}}\right)W_{n,k}
\end{align*}
\begin{lemma}\label{for higher energy estimates}
    There are constants $C_i(r,\epsilon)$ ($i=11,12$) such that for $A\in\left(\frac{1}{2},1\right)$ and $x\in\text{supp }B_{n,k}$,
    \begin{align*}
        |\partial_x^NH\partial_x^3B_-(x,t)|&\leq C_{11}(r,\epsilon)\left(\frac{A}{r}\right)^{Nk}\left(\frac{1}{x_{n,k}(t)}\right)^{N-1}\\
        |\partial_x^NH\partial_x^3B_+(x,t)|&\leq C_{12}(r,\epsilon)\left(\frac{A}{r}\right)^{Nk}\left(\frac{1}{x_{n,k}(t)}\right)^{N-1}
    \end{align*}
\end{lemma}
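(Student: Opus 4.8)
The plan is to repeat, with three extra derivatives, the scaling-and-summation argument from the proof of Lemma~\ref{for closing bootstrap}. Write $m:=N+3$, so $\partial_x^NH\partial_x^3=\partial_x^mH$. From the defining relation \eqref{def omega n}, and because the Hilbert transform commutes with $\partial_x$ and with positive dilations, each summand obeys the exact identity
\[
\partial_x^mHB_{n,j}(x,t)=x_{n,j}^{4-m}(t)\Bigl(\tfrac rA\Bigr)^{(3-m)j}\partial_x^mHW_{n,j}(z_j,t)=x_{n,j}^{1-N}(t)\Bigl(\tfrac Ar\Bigr)^{Nj}\partial_x^mHW_{n,j}(z_j,t),\qquad z_j:=\frac{A^jx}{x_{n,j}(t)r^j}.
\]
Since $|\partial_x^mHB_-|\le\sum_{j<k}|\partial_x^mHB_{n,j}|$ and $|\partial_x^mHB_+|\le\sum_{j>k}|\partial_x^mHB_{n,j}|$, the proof reduces to (i) estimating $\partial_x^mHW_{n,j}(z_j,t)$ using only $\|W_{n,j}\|_{L^1}\le\|\phi\|_{L^1}+C\epsilon$ (Lemma~\ref{bootstrap assumptions implication}), and (ii) summing a geometric series in $|j-k|$, for which we compare $x_{n,j}$ with $x_{n,k}$ via Lemma~\ref{increasing lemma}: together with $x_{n,j}(0)=A^j$ it gives, for $t\in[-T,0]$, $x_{n,j}(t)\ge A^{j-k}x_{n,k}(t)$ when $j<k$ and $x_{n,j}(t)\le A^{j-k}x_{n,k}(t)$ when $j>k$.

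For $B_-$ (so $j<k$) one checks, exactly as in Lemma~\ref{for closing bootstrap}, that $z_j\in[0,(1+2r)r]$, whose distance to $\operatorname{supp}W_{n,j}\subset\{1-2r\le|y|\le1+2r\}$ equals $\rho_0:=1-3r-2r^2>0$ for $r$ small. There $W_{n,j}$ vanishes, so $HW_{n,j}$ is smooth near $z_j$ and $|\partial_x^mHW_{n,j}(z_j,t)|\le\tfrac{m!}{\pi}\rho_0^{-(m+1)}(\|\phi\|_{L^1}+C\epsilon)=:C(r,\epsilon)$. Hence $|\partial_x^mHB_{n,j}(x,t)|\le C(r,\epsilon)\,x_{n,j}^{1-N}(A/r)^{Nj}\le C(r,\epsilon)\,A^{(k-j)(N-1)}x_{n,k}^{1-N}(A/r)^{Nj}$, and rearranging the exponents (with $i=k-j$) bounds $\sum_{j<k}$ by $C(r,\epsilon)\,x_{n,k}^{1-N}(A/r)^{Nk}\sum_{i\ge1}(r^N/A)^i\le C_{11}(r,\epsilon)\,(A/r)^{Nk}x_{n,k}^{1-N}$, the series converging since $r^N<A$.

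For $B_+$ (so $j>k$) one gets instead $z_j\ge(1-2r)r^{-(j-k)}\ge(1-2r)/r$, large for $r$ small, so $z_j^2-(1+2r)^2\ge\tfrac7{16}z_j^2$. Differentiating the far-field bound $HW_{n,j}(z,t)\le\frac{(\|\phi\|_{L^1}+C\epsilon)(1+2r)}{\pi(z^2-(1+2r)^2)}$ $m$ times, as in Lemma~\ref{for closing bootstrap}, gives $|\partial_x^mHW_{n,j}(z_j,t)|\le C(r,\epsilon)\,z_j^{-(m+2)}=C(r,\epsilon)\,z_j^{-(N+5)}$. Substituting $z_j\ge(1-2r)(x_{n,k}/x_{n,j})(A/r)^{j-k}$ and then $x_{n,j}\le A^{j-k}x_{n,k}$, and collecting the powers of $A$ and $r$, one reaches $|\partial_x^mHB_{n,j}(x,t)|\le C(r,\epsilon)\,x_{n,k}^{1-N}(A/r)^{Nk}(Ar^{5})^{j-k}$; summing $\sum_{j>k}(Ar^5)^{j-k}$ gives the second bound. (For $N=1,2$ this is consistent with the $\partial_x^4H$ and $\partial_x^5H$ estimates of Lemma~\ref{for closing bootstrap}; the constants depend also on $N$ through $m!$, $\rho_0^{-(m+1)}$ and $(16/7)^{m+1}$, but only finitely many $N$ are used to close the $\dot H^4$ bootstrap, so one takes the maximum.)

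\textbf{Main difficulty.} All of this is a verbatim repetition of Lemma~\ref{for closing bootstrap}; the only step demanding care is the exponent accounting in (ii): after inserting the profile estimate for $\partial_x^mHW_{n,j}(z_j,t)$, replacing $x_{n,j}$ by $x_{n,k}$ via Lemma~\ref{increasing lemma}, and factoring out $(A/r)^{Nk}x_{n,k}^{1-N}$, one must confirm that the leftover $|j-k|$-dependent factor is $(r^N/A)^{|j-k|}$ for $B_-$ and $(Ar^5)^{|j-k|}$ for $B_+$, i.e.\ a geometric ratio strictly below $1$. This is precisely what forces $r$ small (so $r^N<A<1$ and $Ar^5<1$) together with $A>\tfrac12$; granting these, the series converge and yield the constants $C_{11},C_{12}$.
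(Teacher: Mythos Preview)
Your proposal is correct and is exactly the approach the paper intends: the paper's own proof is the single sentence ``The proof is the same as that of Lemma~\ref{for closing bootstrap} with taking more derivatives,'' and what you have written is precisely the detailed execution of that sentence---the scaling identity for $\partial_x^{N+3}HB_{n,j}$, the near/far-field bounds on $\partial_x^{N+3}HW_{n,j}$ from the $L^1$ control, the comparison $x_{n,j}\gtrless A^{j-k}x_{n,k}$ via Lemma~\ref{increasing lemma}, and the geometric summation in $|j-k|$. Your exponent bookkeeping (ratio $(r^N/A)^{k-j}$ for $B_-$ and $(Ar^5)^{j-k}$ for $B_+$) is consistent with the $N=2$ case worked out explicitly in Lemma~\ref{for closing bootstrap}.
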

\begin{proof}
    The proof is the same as that of Lemma \ref{for closing bootstrap} with taking more derivatives.
\end{proof}
\begin{lemma}
    There are constants $C(N)$ such that for $A\in\left(\frac{1}{2},1\right)$ and $x\in\text{supp}W_{n,k}$,
    \begin{align*}
        \|W_{n,k}\|_{\dot{H}^N}\leq C(N)
    \end{align*}
\end{lemma}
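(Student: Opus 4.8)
The plan is to close, for each fixed $N$, a Gr\"onwall inequality for $t\mapsto\|W_{n,k}(\cdot,t)\|_{\dot H^N}^2$ with constants independent of $n$ and $k$, and to induct on $N$; the base cases $N\le 4$ are contained in the bootstrapping assumption, which gives $\|W_{n,k}\|_{\dot H^N}\le\|\phi\|_{\dot H^N}+\epsilon$, and together with the support condition and Poincar\'e this also bounds $\|W_{n,k}\|_{L^2}$, hence $\|W_{n,k}\|_{C^2}$ by Sobolev embedding. The first step is to rewrite the evolution of $W_{n,k}$ so its structure is visible. With
$$u(x,t)=2b\,x_{n,k}^{-1}(A/r)^k\,\partial_xHB_n(x\,x_{n,k}(t)(r/A)^k,t)+\tfrac{x\dot x_{n,k}}{x_{n,k}},\qquad v(x,t)=b\,\partial_x^2HB_n(x\,x_{n,k}(t)(r/A)^k,t)-c\,\tfrac{\dot x_{n,k}}{x_{n,k}},$$
where $c$ is the exponent in the scaling $B_{n,k}\sim x_{n,k}^{c}(\cdots)$, one computes $\partial_x u=2b\,\partial_x^2HB_n(\cdots)+\dot x_{n,k}/x_{n,k}$, whence the structural identity $v=\tfrac12\partial_x u-(\tfrac12+c)\dot x_{n,k}/x_{n,k}$, so the equation becomes $\partial_t W_{n,k}=u\,\partial_x W_{n,k}+\tfrac12(\partial_x u)W_{n,k}-(\tfrac12+c)(\dot x_{n,k}/x_{n,k})W_{n,k}$. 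For frozen $u$ the operator $W\mapsto u\,\partial_x W+\tfrac12(\partial_x u)W$ is skew-adjoint on $L^2$, which neutralizes the top-order terms in each energy estimate. I then split $u=u_-+u_{\mathrm{self}}+u_+$ according to $B_n=B_-+B_{n,k}+B_+$: on $\text{supp}\,W_{n,k}$ one finds $u_{\mathrm{self}}=c_k\,\partial_xHW_{n,k}$ with $c_k:=2b\,x_{n,k}^2(r/A)^k$, and since $x_{n,k}\sim A^k$ on $[-T,0]$ (Lemma \ref{the first preparing lemma}) the coefficient $c_k\sim(Ar)^k$ is bounded uniformly in $n,k$ (and small for $k\ge1$ once $r$ is small). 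Combining the change of variables in the $B_{n,j}$, $j\ne k$, with Lemma \ref{for closing bootstrap}, Lemma \ref{for higher energy estimates}, the bootstrapping assumptions and $x_{n,k}\sim A^k$, one checks that for every $m$ the derivatives $\partial_x^m(u_-+u_+)$ and $\partial_x^m v$, restricted to $\text{supp}\,W_{n,k}$, are bounded by a constant $C(m,r,\epsilon)$ independent of $n,k$, because the powers of $x_{n,k}$ and $(A/r)^k$ produced by the rescaling cancel exactly those appearing in those lemmas.

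The second step is the energy estimate: applying $\partial_x^N$ and pairing with $\partial_x^N W_{n,k}$, the frozen-transport part drops by skew-adjointness and one is left with (i) the commutator terms $[\partial_x^N,u]\partial_x W_{n,k}$ and $[\partial_x^N,\partial_x u]W_{n,k}$; (ii) the self-interaction contribution $c_k\,\langle\partial_x^N\mathcal{N}_0(W_{n,k}),\,\partial_x^N W_{n,k}\rangle$ with $\mathcal{N}_0(W):=(H\partial_xW)\partial_xW+\tfrac12(H\partial_x^2W)W$; and (iii) the term $-(\tfrac12+c)(\dot x_{n,k}/x_{n,k})\|W_{n,k}\|_{\dot H^N}^2$, whose coefficient is bounded by Lemma \ref{for closing bootstrap}. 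The pieces of (i) built from $\partial_x^m(u_-+u_+)$ are treated by the Leibniz rule together with step one: the lowest-order factor of $W_{n,k}$ is controlled in $C^2$ by the bootstrap, except in the single worst piece where it is bounded in $L^2$ by the inductively known $\|W_{n,k}\|_{\dot H^{N-1}}$; this yields a bound of the form $C(N,r,\epsilon)(1+\|W_{n,k}\|_{\dot H^{N-1}})\|W_{n,k}\|_{\dot H^N}^2+\text{(lower order)}$.

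The step I expect to be the main obstacle is the self-interaction term (ii): since $u_{\mathrm{self}}$ is $c_k$ times the order-zero operator $\partial_xH$ applied to $W_{n,k}$, it is no smoother than $W_{n,k}$, and the Leibniz expansion $\partial_x^N\mathcal{N}_0(W)=\sum_{m=0}^{N+1}\gamma_m\,(H\partial_x^{m+1}W)(\partial_x^{N+1-m}W)$ contains terms of order up to $N+2$ in $W$. For $1\le m\le N-1$ all three factors have order $\le N$ and the integrals are directly bounded by $C(N)\|W_{n,k}\|_{C^2}\|W_{n,k}\|_{\dot H^N}^2$. The three "edge" terms $m\in\{0,N,N+1\}$ must be handled as in the classical energy estimates for the Constantin--Lax--Majda and C\'ordoba--C\'ordoba--Fontelos models: after integrating by parts to move derivatives onto the low-order factors and using the antisymmetry of $H$, the genuinely dangerous piece collapses to an expression of Calder\'on-commutator type, bounded by $C\|\partial_xW_{n,k}\|_{L^\infty}\|W_{n,k}\|_{\dot H^N}^2$ via $\|[H,a]\partial_x f\|_{L^2}\le C\|\partial_x a\|_{L^\infty}\|f\|_{L^2}$ with $a=W_{n,k}$ (whose $C^2$-norm is bootstrap-controlled). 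Thus (ii) is bounded by $c_k\,C(N)\|W_{n,k}\|_{C^2}\|W_{n,k}\|_{\dot H^N}^2$ plus terms already absorbed in (i), and the uniform (and for $k\ge1$ small) size of $c_k$ makes this harmless. I would isolate this as a separate lemma on the quadratic form $\langle\partial_x^N\mathcal{N}_0(W),\partial_x^N W\rangle$, as it is the only place where a potential loss of a derivative has to be defeated.

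Collecting (i)--(iii) gives
$$\frac{d}{dt}\|W_{n,k}(\cdot,t)\|_{\dot H^N}^2\le C(N,r,\epsilon)\big(1+\|W_{n,k}(\cdot,t)\|_{\dot H^{N-1}}^2\big)\|W_{n,k}(\cdot,t)\|_{\dot H^N}^2+C(N,r,\epsilon),$$
with all constants independent of $n$ and $k$. Since the right-hand side involves only $\dot H^N$ and the inductively bounded $\dot H^{N-1}$ norm, and $W_{n,k}(\cdot,0)=\phi\in\dot H^N$ (the profile $\phi$ being smooth), Gr\"onwall's inequality on the fixed interval $[-T,0]$ yields $\|W_{n,k}(\cdot,t)\|_{\dot H^N}\le C(N)$ for all $t\in[-T,0]$, uniformly in $n$ and $k$, which is the assertion.
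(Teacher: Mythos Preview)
Your overall route---an $\dot H^N$ energy estimate plus Gr\"onwall, with the external coefficients $u_-,u_+,v$ controlled uniformly via the rescaling and Lemmas~\ref{for closing bootstrap}, \ref{for higher energy estimates}---is exactly the paper's, which simply asserts
\[
\frac{d}{dt}\|W_{n,k}\|_{\dot H^N}^2\le C(N)\Bigl|\frac{\dot x_{n,k}}{x_{n,k}}\Bigr|\,\|W_{n,k}\|_{\dot H^N}^2
\]
and invokes Lemma~\ref{for closing bootstrap}. You are considerably more explicit than the paper and you correctly single out the self-interaction $c_k\mathcal N_0(W)$ as the only delicate piece.

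There is, however, a genuine gap precisely at the step you flag as the main obstacle. The EMHD self-nonlinearity $\mathcal N_0(W)=(H\partial_xW)\partial_xW+\tfrac12(H\partial_x^2W)W$ carries \emph{two} derivatives of $W$, one more than in CLM or CCF; this extra derivative is not absorbed by the Calder\'on commutator as you describe. Concretely, after the top-order cancellation and one integration by parts on the piece $\tfrac12\int W\,\partial_x^NW\,H\partial_x^{N+2}W\,dx$, the remaining ``edge'' contributions to $\langle\partial_x^NW,\partial_x^N\mathcal N_0(W)\rangle$ are, with $F=\partial_x^NW$,
\[
\frac{N+1}{2}\,T_1-\frac12\,T_2,\qquad T_1=\int(\partial_xW)\,F\,(H\partial_xF)\,dx,\qquad T_2=\int W\,(\partial_xF)\,(H\partial_xF)\,dx.
\]
For $T_2$ your commutator identity gives $2T_2=-\int(\partial_xF)\,[H,W]\partial_xF\,dx$, and Calder\'on with $a=W$, $f=F$ yields $\|[H,W]\partial_xF\|_{L^2}\le C\|\partial_xW\|_{L^\infty}\|F\|_{L^2}$; but the pairing is still against $\partial_xF=\partial_x^{N+1}W$, so one only obtains
\[
|T_2|\le C\|\partial_xW\|_{L^\infty}\,\|W\|_{\dot H^N}\,\|W\|_{\dot H^{N+1}}
\]
(equivalently, Cotlar gives $|T_2|\le\|HW\|_{L^\infty}\|W\|_{\dot H^{N+1}}^2$), not the $\|W\|_{\dot H^N}^2$ bound you assert. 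A similar half-derivative loss persists in $T_1$, and the specific coefficients $(\tfrac{N+1}{2},-\tfrac12)$ do not produce a cancellation between $T_1$ and $T_2$. Hence your Gr\"onwall inequality at level $N$ couples to level $N{+}1$ through $c_k$, and the upward induction on $N$ does not close as written. The paper's two-line proof does not confront this issue either; if one wants to rescue the argument, the obvious lever is the smallness $c_k\le 2b(Ar)^k$, but neither you nor the paper develop a scheme (e.g.\ a weighted hierarchy in $N$) that exploits it.
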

\begin{proof}
    The standard energy estimate shows that
    \begin{align*}
        \frac{d}{dt}\|W_{n,k}\|_{\dot{H}^N}^2\leq C(N)\left|\frac{\dot{x}_{n,k}}{x_{n,k}}\right|\|W_{n,k}\|_{\dot{H}^N}^2=C(N)\partial_x^2HB_-(0,t)\|W_{n,k}\|_{\dot{H}^N}^2
    \end{align*}
    Then the proof is a direct consequence of Lemma \ref{for closing bootstrap}.
\end{proof}
Now we are ready to show that $W_{n,k}$ is close to $\phi$ in $\dot{H}^4$. Denote
\begin{align*}
    E_{n,k}(t)=\int_{\mathbb{R}}\left|\partial_x^4(W_{n,k}-\phi)\right|^2dx.
\end{align*}
Then
\begin{align*}
    \frac{d}{dt}E_{n,k}(t)=2\int_{\mathbb{R}}\partial_x^8(W_{n,k}-\phi)\partial_tW_{n,k}dx=2(E_1+\cdots+E_8)
\end{align*}
where
\begin{align*}
    E_1&=2b\frac{1}{x_{n,k}}\left(\frac{A}{r}\right)^k\int\partial_x^8(W_{n,k}-\phi)\partial_xHB_n\left(xx_{n,k}(t)\left(\frac{r}{A}\right)^k,t\right)\partial_x(W_{n,k}-\phi)dx\\
    E_2&=2b\frac{1}{x_{n,k}}\left(\frac{A}{r}\right)^k\int\partial_x^8(W_{n,k}-\phi)\partial_xHB_n\left(xx_{n,k}(t)\left(\frac{r}{A}\right)^k,t\right)\phi'dx\\
    E_3&=\int\partial_x^8(W_{n,k}-\phi)\frac{x\dot{x}_{n,k}(t)}{x_{n,k}(t)}\partial_x(W_{n,k}-\phi)dx\\
    E_4&=\int\partial_x^8(W_{n,k}-\phi)\frac{x\dot{x}_{n,k}(t)}{x_{n,k}(t)}\phi'dx\\
    E_5&=\int\partial_x^8(W_{n,k}-\phi)b\partial_x^2HB_{n}\left(xx_{n,k}(t)\left(\frac{r}{A}\right)^k,t\right)(W_{n,k}-\phi)dx\\
    E_6&=\int\partial_x^8(W_{n,k}-\phi)b\partial_x^2HB_{n}\left(xx_{n,k}(t)\left(\frac{r}{A}\right)^k,t\right)\phi dx\\
    E_7&=-3\frac{\dot{x}_{n,k}}{x_{n,k}}\int\partial_x^8(W_{n,k}-\phi)(W_{n,k}-\phi)dx\\
    E_8&=-3\frac{\dot{x}_{n,k}}{x_{n,k}}\int\partial_x^8(W_{n,k}-\phi)\phi dx
\end{align*}
By auxiliary lemmas in Appendix A, we have
\begin{align*}
    \left|\frac{d}{dt}\sqrt{E_{n,k}}\right|\leq C_{13}(r, \epsilon)\sqrt{E_{n,k}}+C_{14}(r,\epsilon).
\end{align*}
Note that the coefficients are independent of $n$ and $k$. Solving it for $t<0$ with $E_{n,k}=0$ gives
\begin{align*}
    \sqrt{E_{n,k}}\leq C_{14}\int_t^0e^{\int_t^sC_{13}dr}ds\leq C_{14}|t|e^{C_{13}|t|}.
\end{align*}
Set $t=-T_1$, where $T_1$ is chosen sufficiently close to 0, then we close the first bootstrapping ansatz. To show that the support is within $[-1-2r,-1+2r]\cup[1-2r,1+2r]$, we only need to show that the maximum distance traveled by the end points is at most $r$. Note that the speed of the end points is
\begin{align*}
    2b\frac{1}{x_{n,k}}\left(\frac{A}{r}\right)^k\partial_xHB_{n}\left(xx_{n,k}(t)\left(\frac{r}{A}\right)^k,t\right)+\frac{x\dot{x}_{n,k}}{x_{n,k}}.
\end{align*}
Then the end points move at most
\begin{align*}
    2b\int_{-T_2}^0\frac{1}{x_{n,k}}\left(\frac{A}{r}\right)^k\partial_xHB_{n}\left(xx_{n,k}(t)\left(\frac{r}{A}\right)^k,t\right)+\frac{x\dot{x}_{n,k}}{x_{n,k}}dt\leq C|T_2|.
\end{align*}
where we used lemma \ref{for closing bootstrap}, the compactly supported ansatz for $W_{n,k}$, and the fact that $x_{n,k}(t)\leq A^k$. Choosing $T_2$ sufficiently close to 0
completes the second bootstrapping ansatz. Setting $T:=\min(T_1,T_2)$, we get a lifespan $T$ independent of $n$ and $k$.

\subsection{Convergence to the limit}
\begin{proposition}
    There is a subsequence of $\{B_n\}_{n\in\mathbb{N}}$ that converging in $C^{\infty}$ on a compact set that does not contain 0 and in $L^2$ to a solution $B$ of (\ref{the model}).
\end{proposition}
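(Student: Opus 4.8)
The plan is to obtain $B$ as a subsequential limit of $\{B_n\}$ that is $C^\infty$ on every compact subset of $(\mathbb R\setminus\{0\})\times[-T,0]$ and Cauchy in $L^2(\mathbb R)$ uniformly in $t$, and then to pass to the limit in the equation. Every uniform-in-$n$ estimate used below is a consequence of the bootstrapping ansatz together with Lemmas \ref{the first preparing lemma}--\ref{for higher energy estimates}.

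\emph{Step 1 (uniform bounds away from the origin, and extraction).} First I would record a two-sided bound $c\,A^k\le x_{n,k}(t)\le C\,A^k$ on $[-T,0]$, with $c,C>0$ independent of $n,k$: the upper bound follows from $x_{n,0}\equiv1$ (empty sum in (\ref{at 0 is 0})) and the monotonicity of $x_{n,k}/x_{n,0}$ from Lemma \ref{increasing lemma}, while the lower bound follows from $|\dot x_{n,k}/x_{n,k}|\le C\sum_{j\ge0}A^j$ (Lemma \ref{control aj}) by Gr\"onwall. Hence $\text{supp}\,B_{n,k}(\cdot,t)\subset\{c(1-2r)r^k\le|x|\le C(1+2r)r^k\}$, so on a fixed compact $K\subset\{\rho_0\le|x|\le\rho_1\}$ with $\rho_0>0$ only $B_{n,0},\dots,B_{n,k_\ast}$ are nonzero, where $k_\ast=k_\ast(K)$ is independent of $n$; thus $B_n=\sum_{k=0}^{\min(n,k_\ast)}B_{n,k}$ on $K$. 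Using the scaling identity $\partial_x^\ell B_{n,k}=x_{n,k}^{4-\ell}(r/A)^{(3-\ell)k}\partial_x^\ell W_{n,k}(\cdot,t)$, the uniform bound $\|W_{n,k}\|_{\dot H^N}\le C(N)$ (with Poincar\'e controlling lower-order norms) and Sobolev embedding, together with the two-sided control of $x_{n,k}$, I would get $\|B_{n,k}(\cdot,t)\|_{C^m(K)}\le C(m,K)$ uniformly in $n,t$; inserting this into the PDE $\partial_tB_n=-2bJ_n\partial_xB_n+bB_nH\partial_x^2B_n$ (with $J_n=-\partial_xHB_n$), using Lemmas \ref{for closing bootstrap} and \ref{for higher energy estimates} to bound $J_n$ and $H\partial_x^2B_n$ on $K$, and differentiating in $t$, one gets $\|B_n\|_{C^m(K\times[-T,0])}\le C(m,K)$ for every $m$. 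An Arzel\`a--Ascoli plus diagonal argument, over an exhaustion $K_1\subset K_2\subset\cdots$ of $\mathbb R\setminus\{0\}$ and over the regularity order, then extracts a subsequence $B_{n_j}\to B$ in $C^m(K_i\times[-T,0])$ for all $i,m$, so $B\in C^\infty\big((\mathbb R\setminus\{0\})\times[-T,0]\big)$.

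\emph{Step 2 ($L^2$ convergence and passage to the limit).} Since $x_{n,0}\equiv1$, all $B_n(\cdot,t)$ are supported in the fixed bounded set $\{|x|\le C(1+2r)\}$, and the change of variables $y=A^kx/(x_{n,k}(t)r^k)$ gives $\|B_{n,k}(\cdot,t)\|_{L^2}^2=x_{n,k}^9(t)(r/A)^{7k}\|W_{n,k}\|_{L^2}^2\lesssim A^{2k}r^{7k}$, using the bootstrap control of $\|W_{n,k}\|_{L^2}$ (cf. Lemma \ref{bootstrap assumptions implication}). As $\text{supp}\,B_{n,k}(\cdot,t)$ meets $\{|x|<\rho\}$ only when $r^k\lesssim\rho$, the triangle inequality yields $\|B_n\|_{L^2(\{|x|<\rho\})}\lesssim\sum_{k>\log\rho/\log r}(Ar^{7/2})^k\to0$ as $\rho\to0^+$, uniformly in $n$ and $t$; combined with the uniform convergence of $B_{n_j}$ on $\{\rho\le|x|\le C(1+2r)\}\times[-T,0]$ from Step 1, this makes $\{B_{n_j}(\cdot,t)\}$ Cauchy in $L^2(\mathbb R)$ uniformly in $t$, with limit $B$. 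To pass to the limit in the PDE on a compact $K\subset\{3\rho\le|x|\le C(1+2r)\}$, I would split $B_n=(1-\chi)B_n+\chi B_n$ with $\chi\in C^\infty(\mathbb R)$, $\chi\equiv0$ on $\{|x|<\rho\}$, $\chi\equiv1$ on $\{|x|>2\rho\}$: for $x\in K$, $\partial_x^mH\big((1-\chi)B_n\big)(x)$ is an ordinary integral of $(1-\chi)B_n$ against a kernel supported in $\{|y|<2\rho\}$ that is bounded in $L^2_y$ uniformly for $x\in K$ (as $\operatorname{dist}(K,\{|y|<2\rho\})\ge\rho$), hence converges uniformly on $K$ by the $L^2$ convergence; and $\chi B_{n_j}\to\chi B$ in $C^{m+1}$ with fixed compact support, so the standard bound $\|\partial_x^mHf\|_{L^\infty(\mathbb R)}\le C\|f\|_{C^{m+1}}$ for $f$ supported in a fixed bounded set gives $\partial_x^mH(\chi B_{n_j})\to\partial_x^mH(\chi B)$ uniformly. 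Adding the pieces, $\partial_x^mHB_{n_j}\to\partial_x^mHB$ uniformly on $K$ for $m=0,1,2$; since also $B_{n_j},\partial_xB_{n_j},\partial_tB_{n_j}$ converge uniformly on $K$, passing to the limit in $\partial_tB_n=-2bJ_n\partial_xB_n+bB_nH\partial_x^2B_n$ and $\partial_xB_n=HJ_n$ shows that $B$ solves (\ref{the model}) classically on $(\mathbb R\setminus\{0\})\times[-T,0]$; together with the regularity $B(\cdot,t)\in C^{3,s}(\mathbb R)\cap H^3(\mathbb R)$ from Section 3, this upgrades to a solution on all of $\mathbb R\times[-T,0]$.

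\emph{Expected main obstacle.} The delicate step is the passage to the limit in the Hilbert-transform terms in Step 2: the $B_{n_j}$ converge smoothly only away from the origin, while $H$ is nonlocal and feels the infinitely many bubbles $B_{n,k}$ that accumulate at $x=0$. The mechanism that closes this gap is the uniform geometric decay $\|B_{n,k}(\cdot,t)\|_{L^2}\lesssim(Ar^{7/2})^k$ established in Step 2, together with the smoothness of the kernel $(x-y)^{-1}$ and its $x$-derivatives at positive distance from the origin. A secondary point is the uniform two-sided control $c\,A^k\le x_{n,k}(t)\le C\,A^k$ underpinning Step 1, which relies on Lemmas \ref{increasing lemma} and \ref{control aj}.
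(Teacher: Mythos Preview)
Your argument is correct, but it takes a somewhat different route from the paper's. The paper extracts subsequences at the level of the scalar ODE trajectories $x_{n,k}$ and the profiles $W_{n,k}$ (via equicontinuity and the uniform $\dot H^N$ bounds), defines $B_k$ and $B=\sum_k B_k$, and then---crucially---upgrades the global convergence from $L^2$ to $H^{3.5-}$ using the scaling identity $\|\partial_x^m B_{n,k}\|_{L^2}\le (Ar^{3.5-m})^k\|W_{n,k}\|_{L^2}$ for all $m<3.5$. With $B_n\to B$ in $H^{3.5-}$, the Hilbert-transform nonlinearities converge globally by the boundedness of $H$ on Sobolev spaces, so the passage to the limit is one line and $B\in H^{3.5-}$ falls out immediately. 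You instead apply Arzel\`a--Ascoli directly to $B_n$ on space--time compacts away from $0$, prove only $L^2$ convergence globally (the $m=0$ case of the same estimate), and compensate by the cutoff decomposition $B_n=(1-\chi)B_n+\chi B_n$ to push the nonlocal $H$ through the limit on each compact $K$. Your route is a bit more hands-on but makes the mechanism transparent: the far piece $(1-\chi)B_n$ is handled by the smoothness of $(x-y)^{-1}$ at positive distance together with the uniform $L^2$ smallness near $0$, and the near piece $\chi B_n$ by $C^{m+1}$ convergence with fixed support. One small remark: your forward reference to the $C^{3,s}\cap H^3$ regularity of Section~3 to ``upgrade'' the equation across $x=0$ is avoidable---the same scaling computation you used for $\|B_{n,k}\|_{L^2}$ gives $\|\partial_x^m B_{n,k}\|_{L^2}\lesssim (Ar^{3.5-m})^k$ for $m\le 3$, so $B_n\to B$ in $H^3$ directly and the weak formulation holds globally without invoking Section~3.
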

\begin{proof}
    Since $x_{n,k}\leq A^{k}$, from the ODE we know that $\{x_{n,k}(t)\}_{n\in\mathbb{N}}$ is equicontinuous, so we can extract a subsequence such that $x_{n,k}\to x_k$ uniformly for $t\in[T,0]$ and all $k$. From higher order energy estimates, we know that the $C^N$ norms of $W_{n,k}$ are bounded uniformly in $n$, so we can extract a subsequence such that $W_{n,k}\to W_k$ in $C^N$, and thus $B_{n,k}\to B_{k}$ in $C^N$. Denote $B:=\sum_{k=0}^{\infty}B_k$. Since
    \begin{align*}
        \text{supp }B_{n,k}\subset x_{n,k}(r/A)^k\text{supp }W_{n,k}\subset[-(1+2r)r^k,(1+2r)r^k],
    \end{align*}
    we have
    \begin{align*}
        B-B_n=\sum_{k=0}^n(B_k-B_{n,k})+\sum_{k=n+1}^{\infty}B_k\to0
    \end{align*}
    as $n\to\infty$ in $C^N$ away from $(-1/N,1/N)$. Since
    \begin{align*}
        \|\partial_x^mB_{n,k}\|_{L^2}\leq\left(Ar^{3.5-m}\right)^k\|W_{n,k}\|_{L^2},
    \end{align*}
    where $\left(Ar^{3.5-m}\right)^k<1$ for $m<3.5$, we conclude that
    \begin{align*}
        \sum_{j>k}\|B_{n,j}\|_{H^{3.5-}}\to0\quad\text{as}\quad k\to\infty
    \end{align*}
    uniformly in $n$, so $B_n\to B$ in $H^{3.5-}$ as well. Since each $B_n$ is a solution of (\ref{aucxilly equation}) and the dissipation term of (\ref{aucxilly equation}) converges to 0 as $n\to\infty$, so $B$ is a solution of (\ref{the model}). Since $B$ is in $H^{3.5-}$, it satisfies the weak formulation.
\end{proof}

\section{Properties of the solution}
\subsection{H\"older continuity}
\begin{theorem}
    The solution $B(x,t)$ is in $C^{3,s}(\mathbb{R})$ for $|t|$ sufficiently small.
\end{theorem}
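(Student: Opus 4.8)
The plan is to exploit the multiscale structure $B(\cdot,t)=\sum_{k\ge 0}B_k(\cdot,t)$. Away from the origin only finitely many blocks are nonzero, so $B(\cdot,t)$ is automatically $C^\infty$ there; the whole content of the statement is the regularity of $\partial_x^3B$ at $x=0$, which I would get from a Besov/Littlewood--Paley--type interpolation on $f:=\partial_x^3B(\cdot,t)$. First I would record the exact scaling of the blocks: differentiating $B_k(x,t)=x_k^4(t)(r/A)^{3k}W_k\!\big(A^kx/(x_k(t)r^k),t\big)$ gives, for every $m\ge 0$,
\[
 \partial_x^m B_k(x,t)=x_k^{4-m}(t)\Big(\tfrac{A}{r}\Big)^{(m-3)k}W_k^{(m)}\!\Big(\tfrac{A^kx}{x_k(t)r^k},t\Big),
\]
so in particular $\partial_x^3B_k=x_k(t)\,W_k'''(\cdots)$ and $\partial_x^4B_k=(A/r)^k\,W_k''''(\cdots)$. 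The $n$-independent $\dot H^N$ bounds for the profiles proved above (inherited by the limits $W_k$), together with the compact-support ansatz and the embedding $H^N(\mathbb R)\hookrightarrow C^{N-1}(\mathbb R)$, yield $\|W_k'''\|_{L^\infty}+\|W_k''''\|_{L^\infty}\le C$ uniformly in $k$ and $t\in[-T,0]$, and since $x_k(t)\le A^k$ on $[-T,0]$ we obtain the block estimates
\[
 \|\partial_x^3B_k(\cdot,t)\|_{L^\infty}\le CA^k,\qquad \|\partial_x^4B_k(\cdot,t)\|_{L^\infty}\le C\big(\tfrac{A}{r}\big)^k,\qquad \mathrm{supp}\,B_k(\cdot,t)\subset\{|x|\sim r^k\}.
\]
Because $\phi$ vanishes near $0$ each $\partial_x^3B_k(0,t)=0$, hence $f(0)=0$ and $\|f\|_{L^\infty}\le C\sum_kA^k<\infty$, so $B(\cdot,t)\in C^3$.

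Next I would prove the Hölder bound by the standard telescoping argument. Fix $t$, write $f=\sum_k f_k$ with $f_k=\partial_x^3B_k(\cdot,t)$, and for $x,y$ with $h:=|x-y|\le1$ choose $K\in\mathbb N$ with $r^K\le h<r^{K-1}$; then split $|f(x)-f(y)|\le\sum_{k\le K}|f_k(x)-f_k(y)|+\sum_{k>K}\big(|f_k(x)|+|f_k(y)|\big)$. For the low modes the mean value theorem and the fourth-derivative bound give $\sum_{k\le K}|f_k(x)-f_k(y)|\le Ch\sum_{k\le K}(A/r)^k$; choosing the free parameter $r$ small enough that $r<A$ makes $A/r>1$, so this is $\le Ch(A/r)^K\le Cr^{K-1}(A/r)^K=CA^K$. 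For the high modes the sup bound and $A<1$ give $\sum_{k>K}\big(|f_k(x)|+|f_k(y)|\big)\le C\sum_{k>K}A^k\le CA^K$. Setting $s:=\ln A/\ln r$, which lies in $(0,1)$ precisely because $r<A<1$, we have $A^K=(r^K)^s\le h^s$ since $r^K\le h$ and $s>0$; hence $|f(x)-f(y)|\le C|x-y|^s$ for $|x-y|\le1$, while the case $|x-y|\ge1$ is immediate from $f\in L^\infty$. Therefore $\partial_x^3B(\cdot,t)\in C^{0,s}(\mathbb R)$, i.e.\ $B(\cdot,t)\in C^{3,s}(\mathbb R)$ with $s=\ln A/\ln r\in(0,1)$, for every $t\in[-T,0]$.

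The only genuinely new work is the block scaling identity and the uniform constants in the block estimates; once those are in hand the rest is a routine lacunary interpolation, and I do not expect the transport-type terms in the evolution of $W_{n,k}$ to interfere, since they enter only through already-established bounds and do not change the scaling exponents. Accordingly the main obstacle is bookkeeping: making sure the constants in $\|\partial_x^mB_k\|_{L^\infty}$ and in $\mathrm{supp}\,B_k\subset\{|x|\sim r^k\}$ are uniform in both $k$ and $t\in[-T,0]$. Uniformity in $k$ comes from the $n$-independent (hence $k$-independent) $\dot H^N$ bounds on the profiles and from $x_k(t)\le A^k$; the two-sided comparison $cA^k\le x_k(t)\le A^k$ needed for the inner end of the support follows from $|\dot x_k/x_k|=|H\partial_x^2B_-(0,t)|\le C$, a consequence of Lemma~\ref{control aj} together with $\sum_j x_j\le(1-A)^{-1}$, integrated over the short interval $[-T,0]$. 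The slightly delicate conceptual point worth stressing is that the choice $c=4$ in the block prefactor is exactly what makes $\partial_x^3B_k$ scale like $x_k(t)$ with no residual scale factor, which is what puts the borderline regularity at $C^3$ and produces the fractional exponent $s=\ln A/\ln r$.
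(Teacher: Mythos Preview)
Your argument is correct and self-contained: the block scaling identity, the uniform $L^\infty$ bounds $\|\partial_x^3B_k\|_{L^\infty}\le CA^k$ and $\|\partial_x^4B_k\|_{L^\infty}\le C(A/r)^k$, and the lacunary low/high splitting at the scale $r^K\le|x-y|<r^{K-1}$ combine exactly as you describe to yield $\partial_x^3B\in C^{0,s}$ with $s=\ln A/\ln r\in(0,1)$.

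The paper proceeds differently. Rather than telescoping, it uses the disjointness of the block supports to reduce by a short case analysis to $x,y$ lying in the support of a single $B_{n,k}$, and then controls the single-block increment via $\|\partial_x^4B_{n,k}\|_{L^2}$. More importantly, instead of the crude bound $x_k(t)\le A^k$ that you use, the paper invokes the ODE information (Lemma~\ref{the first preparing lemma}/Remark~\ref{tech remark}) to get, for each fixed $t<0$, the sharper decay $x_k(t)\le A^ke^{(-a+o(1))k}$ as $k\to\infty$; this feeds into the increment bound and produces the larger H\"older exponent $s=\dfrac{a-\ln A}{a-\ln r}$. Since adding the same positive number $a$ to numerator and denominator of $\dfrac{-\ln A}{-\ln r}<1$ increases the ratio, the paper's exponent strictly dominates yours. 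In short: your Besov-style interpolation is cleaner and more routine, but by ignoring the backward-in-time gain in $x_k$ it gives a weaker (though still positive) H\"older exponent; the paper's argument is more ad hoc in its reduction step but squeezes extra regularity out of the dynamics.
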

\begin{proof}
    We bound the difference $\partial_x^3B(x,t)-\partial_x^3B(y,t)$. There is nothing to prove if $\partial_x^3B(x,t)-\partial_x^3B(y,t)=0$, so we assume without loss of generality that $\partial_x^3B(x,t)\neq0$. If $y$ has a different sign as $x$, by the inequality $a^s+b^s\leq2^{1-s}(a+b)^s$ for $a$, $b\geq0$ and $s\in[0,1]$, it follows that
    \begin{align*}
        \frac{|\partial_x^3B(y,t)-\partial_x^3B(x,t)|}{|x-y|^s}\leq2^{s-1}\max\left(\frac{|\partial_x^3B(x,t)|}{|x|^s},\frac{|\partial_x^3B(y,t)|}{|y|^s}\right).
    \end{align*}
    Since $\lim_{x\to0}\frac{\partial_x^3B(x,t)}{|x|^s}\leq\lim_{x\to0}\frac{\partial_x^3B(x,t)}{|x|}=\partial_x^4B(0)=0$,
    we can assume without loss of generality that $x$ and $y$ have the same sign.\\
    Since $\partial_x^3B(x,t)\neq0$, there is an integer $k$ such that $x\in\text{supp }\partial_x^3B_{n,k}(x,t)$. If $y\notin\cup_{j}\text{supp }\partial_x^3B_{n,j}$, then $\partial_x^3B(y,t)=0$. Let $z$ be one of the two endpoints of $\text{supp }\partial_x^3B_{n,k}$ that is on the same side of $x$ as $y$. Then $\partial_x^3B(z,t)=0$ and $|x-z|\leq|x-y|$, so
    \begin{align*}
        \frac{|\partial_x^3B(y,t)-\partial_x^3B(x,t)|}{|x-y|^s}\leq\frac{|\partial_x^3B_{n,k}(z,t)-\partial_x^3B_{n,k}(x,t)|}{|x-z|^s}.
    \end{align*}
    So we can assume without loss of generality that $y\in\text{supp }\partial_x^3B_{n,j}$ for some $j\in\mathbb{N}$. If $j\neq k$, then the $z$ chosen above is between $x$ and $y$, so $|x-y|\geq|z-y|$ and $|x-z|$. Also, $|\partial_x^3B(y,t)-\partial_x^3B(x,t)|\leq\max(|\partial_x^3B(y,t),|\partial_x^3B(x,t)|)$, so
    \begin{align*}
        \frac{|\partial_x^3B(y,t)-\partial_x^3B(x,t)|}{|x-y|^s}\leq\max\left(\frac{|\partial_x^3B_{n,k}(y,t)-\partial_x^3B_{n,k}(z,t)}{|z-y|^s},\frac{|\partial_x^3B_{n,k}(z,t)-\partial_x^3B_{n,k}(x,t)}{|x-z|^s}\right).
    \end{align*}
    So we can assume without loss of generality that $y\in\text{supp }\partial_x^3B_{n,k}$. Then
    \begin{align*}
        |\partial_x^3B(y,t)-\partial_x^3B(x,t)|\leq\|\partial_x^4B_{n,k}\|_{L^2}\sqrt{|x-y|}\leq\|W_{n,k}\|_{\dot{H}^4}\sqrt{x_k(t)(A/r)^k|x-y|}.
    \end{align*}
    If $t<0$, by Remark \ref{tech remark},
    \begin{align*}
        \liminf_{k\to\infty}\int_t^0x_k(s)ds\geq a
    \end{align*}
    where $a>0$ solves $a=A(1-e^{-a})$. Then as $k\to\infty$,
    \begin{align*}
        x_k(t)\leq x_k(0)e^{(-a+o(1))k}=A^ke^{(-a+o(1))k}
    \end{align*}
    so
    \begin{align*}
        |x-y|\leq4rx_k(t)(r/A)^k\leq r^ke^{(-a+o(1))k}
    \end{align*}
    so for $s\in[0,1/2]$ we have that
    \begin{align*}
        |\partial_x^3B(y,t)-\partial_x^3B(x,t)|\leq \|W_{n,k}\|_{\dot{H}^4}(A^2e^{(2-2s)(-a+o(1))}/r^{2s})^{k/2}|x-y|^s
    \end{align*}
    so $\partial_x^2B\in C^{s-}$ where $A^2e^{-(2-2s)a}=r^{2s}$, or $\ln{A}-(1-s)a=s\ln{r}$, or
    \begin{align*}
        s=\frac{a-\ln{A}}{a-\ln{r}}.
    \end{align*}
    Since $\ln{A}<A-1=A(1-e^{-\ln{A}})$, $a>\ln{A}$ so $s>0$.
\end{proof}
\subsection{Rate of blow up}
\begin{theorem}
    There is a constant $C>0$ such that for all $T<t<0$, $$\frac{C}{|t|}\leq\max{|\partial_x^3B(\cdot,t)|}\leq \frac{1}{C|t|}$$
\end{theorem}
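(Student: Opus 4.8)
The plan is to first reduce $\max_x|\partial_x^3B(\cdot,t)|$ to the sizes of the positions $x_k(t)$, and then to extract the rate $1/|t|$ from the ODE (\ref{at 0 is 0}) with the help of Lemmas \ref{the first preparing lemma} and \ref{increasing lemma}. Differentiating the profile ansatz (\ref{def omega n}) three times gives the scaling identity $\partial_x^3B_{n,k}(x,t)=x_{n,k}(t)(\partial_x^3W_{n,k})(A^kx/(x_{n,k}(t)r^k),t)$, hence $\max_x|\partial_x^3B_{n,k}(\cdot,t)|=x_{n,k}(t)\max_x|\partial_x^3W_{n,k}(\cdot,t)|$, and likewise for the limit objects $B_k,W_k,x_k$ constructed in Section 2. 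Since the scales $\rho_k(t):=x_k(t)(r/A)^k$ satisfy $\rho_{k+1}/\rho_k=(x_{k+1}/x_k)(r/A)\le r$ for $t\le 0$ by Lemma \ref{increasing lemma}, for $r$ small the supports of the functions $\partial_x^3B_k(\cdot,t)$ are pairwise disjoint, and therefore $\max_x|\partial_x^3B(\cdot,t)|=\sup_k x_k(t)\max_x|\partial_x^3W_k(\cdot,t)|$. Finally, the bootstrap bound $\|W_{n,k}-\phi\|_{\dot H^4}\le\epsilon$ (which passes to $W_k$ in the limit) together with Sobolev embedding on the fixed-length support gives $\|\partial_x^3(W_k-\phi)\|_{L^\infty}\le C\epsilon$; fixing $\phi$ with $\partial_x^3\phi\not\equiv0$ and taking $\epsilon$ small, $\max_x|\partial_x^3W_k(\cdot,t)|\in[M/2,\,2M]$ with $M:=\|\partial_x^3\phi\|_{L^\infty}>0$, uniformly in $k$ and $t\in[-T,0)$. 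It thus suffices to prove that $\sup_k x_k(t)$ is comparable to $1/|t|$ on $[-T,0)$.

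For the upper bound I would use that, by Lemma \ref{control aj}, the coefficients $a_j(t):=H\partial_x^2W_j(0,t)$ satisfy $0<\delta\le a_j(t)\le\delta'$ for all $j$ and $t$ once $\phi$ is chosen with $H\phi''(0)>0$ and $\epsilon$ is small, and that $x_j(t)\ge A^{j-k}x_k(t)$ for $j<k$, $t\le 0$, by Lemma \ref{increasing lemma}; here $A>1$ is the amplitude-growth ratio of the hierarchy. Substituting the latter into (\ref{at 0 is 0}) gives $\dot x_k/x_k=\sum_{j<k}a_jx_j\ge(\delta/A)x_k$ for $k\ge 1$, i.e. $\frac{d}{dt}(1/x_k)\le-\delta/A$; integrating from $t<0$ to $0$ with $x_k(0)=A^k$ yields $x_k(t)\le A/(\delta|t|)$, and since $x_0\equiv1$ the bound $\sup_k x_k(t)\le A/(\delta|t|)$ holds on $[-T,0)$ provided $T\le A/\delta$.

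For the matching lower bound I would isolate the ``active'' scale $k(t)$, defined as the largest integer $k$ with $\delta'A^k|t|/(A-1)\le 1$. Note $k(t)\to\infty$ as $t\to 0^-$: the fact that the dominant scale escapes to infinity is precisely what will make the blow-up fail to be asymptotically self-similar. Feeding the opposite bounds $a_j\le\delta'$ and $x_j(s)\le x_j(0)=A^j$ on $[t,0]$ (again Lemma \ref{increasing lemma}) into (\ref{at 0 is 0}) gives $\dot x_k/x_k\le\delta'\sum_{j<k}A^j\le\delta'A^k/(A-1)$, so integrating from $t$ to $0$ yields $x_k(t)\ge A^k\exp(-\delta'A^k|t|/(A-1))$. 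For $k=k(t)$ the exponent is at least $-1$, and since $A^{k(t)+1}>(A-1)/(\delta'|t|)$ we conclude $\sup_k x_k(t)\ge x_{k(t)}(t)\ge e^{-1}A^{k(t)}\ge(A-1)/(eA\delta'|t|)$. Combined with the previous two paragraphs this proves the theorem with, e.g., $C=\min(M(A-1)/(2eA\delta'),\,\delta/(2MA),\,1)$.

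The step I expect to be the genuine obstacle is the lower bound: one has to pick out, for each small $|t|$, the precise scale $k(t)$ at which the hierarchy $\{x_k\}$ has not yet ``collapsed'', and then verify that $x_{k(t)}(t)$ has actually grown to order $A^{k(t)}$, comparable to $1/|t|$, rather than to something much smaller — and this is exactly where the two-sided control of the $a_j$ from Lemma \ref{control aj} and the monotonicity of the ratios $x_k/x_j$ from Lemma \ref{increasing lemma} (together, if convenient, with the integral bound of Lemma \ref{the first preparing lemma}) are used essentially. By contrast, the reduction in the first paragraph — disjointness of the supports and the uniform two-sided bounds on the profile factor $\max_x|\partial_x^3W_k|$ — and the upper bound are comparatively routine once $\epsilon$, $r$ and $T$ have been fixed small.
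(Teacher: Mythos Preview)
Your proof is correct and the overall strategy matches the paper's: reduce $\max|\partial_x^3 B|$ to $\sup_k x_k(t)$ via the disjoint-support structure and the uniform profile bounds, then obtain the lower bound by selecting the critical scale $k=k(t)$ with $A^{k(t)}\sim 1/|t|$ and showing $\ln x_{k}(0)-\ln x_{k}(t)\le C A^{k}|t|=O(1)$, exactly as the paper does. The one point of departure is the upper bound: the paper combines monotonicity of $x_k$ with the uniform time-integral estimate of Lemma~\ref{the first preparing lemma}, writing $x_k(t)\le|t|^{-1}\int_t^0 x_k(s)\,ds\le a/(\delta|t|)$, whereas you extract a Riccati inequality $\dot x_k\ge(\delta/A)x_k^2$ from Lemma~\ref{increasing lemma} and integrate $1/x_k$ directly. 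Your route is slightly more self-contained (it bypasses Lemma~\ref{the first preparing lemma}); the paper's route has the advantage that the integral bound is already in hand and is reused elsewhere (for instance in the H\"older regularity argument).
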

\begin{proof}
    From the ansatz, it follows that
    \begin{align*}
        \max{|\partial_x^3B(\cdot,t)|}=\sup_{k}x_{k}(t)\max|\partial_x^3W_k(\cdot,t)|\sim(\|\partial_x^3\phi\|_{L^{\infty}}+C\epsilon)\sup_{k}x_k(t)
    \end{align*}
    so it suffices to show the same bound for $\sup_kx_k(t)$. Let $k$ be such that $-A^{-k}\leq t\leq -A^{-(k+1)}$. For the upper bound, since $x_k(t)$ is increasing in $t$, we use Lemma \ref{the first preparing lemma} to get
    \begin{align*}
        x_k(t)\leq\frac{1}{|t|}\int_{-T}^0x_k(s)ds\leq\frac{a}{C|t|}
    \end{align*}
    For the lower bound,
    \begin{align*}
        \frac{d}{dt}\ln{x_k(t)}=\sum_{j=0}^{k-1}x_j(t)H\partial_x^2W_{j}(0,t)\leq \|H\phi''\|_{L^{\infty}}\sum_{j=0}^{k-1}A^{j}\leq C\frac{A^k}{A-1}
    \end{align*}
    so
    \begin{align*}
        \ln{x_k}(0)-\ln{x_k(t)}\leq C
    \end{align*}
    so
    \begin{align*}
        x_k(t)\geq e^{-C}A^{k}\geq e^{-C}\frac{1}{A|t|}
    \end{align*}
\end{proof}
\subsection{Non-asymptotic self-similarity}
In this subsection, we show that there is no asymptotically self-similar solution to (\ref{the model}). Consider an ansatz of self-similar solution
\begin{align*}
    \omega=(T-t)^{\alpha}\Omega\left((T-t)^{c}x\right).
\end{align*}
Plugging this ansatz into (\ref{the model}) with $\alpha=-2c-1$ yields the equation
\begin{align*}
    (2c+1)\Omega-cx\Omega_x=2bU\Omega_x+bU_x\Omega,\quad\Omega_x=HU.
\end{align*}
This motivates the following definition.
\begin{definition}
    A solution $\omega$ of (\ref{the model}) is asymptotically self-similar if there is a profile $\Omega$ in some weighted $H^2$ space such that
    \begin{align*}
        -(2c+1)\Omega+cx\Omega_x=2bU\Omega_x+bU_x\Omega,\quad\Omega_x=HU,
    \end{align*}
    i.e.
    \begin{align*}
        (2bU-cx)\Omega_x=(bU_x-2c-1)\Omega
    \end{align*}
    where $-\frac{1}{2}<c$ and for some time-dependent continuous scaling factors $C_w$, $C_l>0$,
    \begin{align*}
        C_w(t)\omega(C_l(t)x,t)\to\Omega
    \end{align*}
    in some weighted $L^2$ norm as $t$ approaches the blow-up time.
\end{definition}
However, using the dynamic rescaling formulation, one can find that there is no asymptotic self-similar profile that satisfies the above definition.
Denote
\begin{align*}
    \tilde{\omega}(x,t)=C_w(t)\omega(C_l(t)x,t),
\end{align*}
where $\omega$ is a solution to (\ref{the model}). Plugging $\tilde{\omega}$ in (\ref{the model}) yields
\begin{align*}
    \left(2bC_w^{-1}C_l^{-2}\tilde{u}-\dot{C}_lC_l^{-1}x\right)\partial_x\tilde{\omega}=\left(bC_w^{-1}C_l^{-2}\tilde{u}_x+C_w^{-1}\right)\tilde{\omega}.
\end{align*}
Passing to the limit $t\to 0$ and requiring it to satisfy the definition of an asymptotic self-similar solution, one needs
\begin{align*}
    Cw^{-1}C_l^{-2}\to1\text{ and } C_w^{-1}\to-(2c+1)
\end{align*}
This implies that
\begin{align*}
    C_l^{-2}\to-\frac{1}{2c+1}<0
\end{align*}
which is impossible since $c>-\frac{1}{2}$. Thus the blow-up solution cannot be asymptotically self-similar.

\appendix
\section{Auxiliary estimates}
\begin{lemma}
    $E_1\leq C\|W_{n,k}-\phi\|_{\dot{H}^4}^2$
\end{lemma}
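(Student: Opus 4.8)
The plan is to rewrite $E_1$ as a sum of trilinear integrals in $f:=W_{n,k}-\phi$ and the rescaled stretching coefficient, and then close by H\"older's inequality, the Poincar\'e inequality (as in Lemma~\ref{bootstrap assumptions implication}), and the pointwise bounds of Lemma~\ref{for closing bootstrap}. Write $\lambda:=x_{n,k}(t)(r/A)^k$, so the prefactor of $E_1$ is $2b\lambda^{-1}$, and set $g(x):=(\partial_y HB_n)(\lambda x,t)$, where $\partial_y$ denotes differentiation of $HB_n$ in its first slot; then $E_1=2b\lambda^{-1}\int \partial_x^8 f\cdot g\cdot\partial_x f\,dx$. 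Since $W_{n,k}$ and $\phi$ are both supported in $[-1-2r,-1+2r]\cup[1-2r,1+2r]$, so is $f$, and every derivative of $f$ vanishes at the endpoints of that set; hence all the integrations by parts below produce no boundary terms.

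First I would integrate by parts four times to move four derivatives off $\partial_x^8 f$, obtaining by the Leibniz rule
\[
\int \partial_x^8 f\cdot g\,\partial_x f\,dx=\int \partial_x^4 f\cdot\partial_x^4(g\,\partial_x f)\,dx=\sum_{i=0}^4\binom{4}{i}\int \partial_x^4 f\cdot\partial_x^i g\cdot\partial_x^{5-i}f\,dx .
\]
The borderline term $i=0$ still carries $\partial_x^5 f$, but one further integration by parts turns it into a good term,
\[
\int \partial_x^4 f\cdot g\cdot\partial_x^5 f\,dx=\tfrac12\int g\,\partial_x\!\bigl((\partial_x^4 f)^2\bigr)dx=-\tfrac12\int \partial_x g\,(\partial_x^4 f)^2\,dx .
\]
For $1\le i\le4$ the factor $\partial_x^{5-i}f$ has order at most $4$; since $f$ is supported in two intervals of length $4r$, iterated Poincar\'e gives $\|\partial_x^{5-i}f\|_{L^2}\le C(r)\|\partial_x^4 f\|_{L^2}=C(r)\|f\|_{\dot H^4}$. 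Hence
\[
|E_1|\le 2|b|\,\lambda^{-1}\Bigl(\tfrac12\|\partial_x g\|_{L^\infty}+C(r)\sum_{i=1}^4\binom{4}{i}\|\partial_x^i g\|_{L^\infty}\Bigr)\|f\|_{\dot H^4}^2 ,
\]
all sup-norms over $\mathrm{supp}\,f$.

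It then remains to show $\lambda^{-1}\|\partial_x^i g\|_{L^\infty(\mathrm{supp}\,f)}\le C(r,\epsilon)$ for $i=1,\dots,4$, uniformly in $n,k$ and $t\in[-T,0]$. By the chain rule $\partial_x^i g(x)=\lambda^i(\partial_y^{i+1}HB_n)(\lambda x,t)$, so $\lambda^{-1}\partial_x^i g(x)=\lambda^{i-1}(\partial_y^{i+1}HB_n)(\lambda x,t)$ with $\lambda^{i-1}=x_{n,k}^{i-1}(r/A)^{(i-1)k}$. Split $B_n=B_-+B_{n,k}+B_+$ around the index $k$. For $x\in\mathrm{supp}\,f\subset\mathrm{supp}\,W_{n,k}$ one has $\lambda x\in\mathrm{supp}\,B_{n,k}$, so Lemma~\ref{for closing bootstrap} applies to $\partial_y^{i+1}HB_\pm$ (for $i=1,\dots,4$ this uses exactly the bounds on $\partial_x^2,\dots,\partial_x^5$ of $HB_\pm$ listed there); multiplying those bounds by $\lambda^{i-1}$, every negative power of $x_{n,k}$ and every power of $A/r$ is absorbed, leaving a quantity $\le C(r,\epsilon)$ (one uses $x_{n,k}\le A^k$ and, $r$ being small, $Ar<1$). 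For the middle piece, unwinding (\ref{def omega n}) gives $\lambda^{i-1}(\partial_y^{i+1}HB_{n,k})(\lambda x,t)=x_{n,k}^2(r/A)^k\,\partial_x^{i+1}HW_{n,k}(x,t)\le (Ar)^k\|\partial_x^{i+1}HW_{n,k}\|_{L^\infty}\le C(r,\epsilon)$, the last bound coming from the energy estimates $\|W_{n,k}\|_{\dot H^N}\le C(N)$ together with Poincar\'e and the $1$D Sobolev embedding. Summing the three pieces finishes the proof: $E_1\le C(r,\epsilon)\|W_{n,k}-\phi\|_{\dot H^4}^2$.

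The step I expect to be the main obstacle is the last paragraph's bookkeeping: one must check order by order that the weight $\lambda^{i-1}$ exactly cancels the $A$-, $r$- and $x_{n,k}$-growth in the matching bound of Lemma~\ref{for closing bootstrap}, and that the rescaled argument $\lambda x$ genuinely lies in the range $(|x|/x_{n,k})(A/r)^k\in[1-2r,1+2r]$ where those bounds are valid. The $B_+$ contribution at the top order $i=4$, whose bound carries a factor $1/x_{n,k}$, is the one to watch; should a truly borderline case appear, one falls back on the lower bound $x_{n,k}(t)\ge c_0(T)A^k$ on $[-T,0]$, which follows from the ODE (\ref{at 0 is 0}) together with the boundedness of $H\partial_x^2 W_{n,j}(0,t)$ from Lemma~\ref{control aj}.
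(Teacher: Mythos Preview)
Your argument is correct and follows essentially the same route as the paper: integrate by parts four times to obtain the Leibniz expansion $\sum_{i=0}^4\binom{4}{i}\int\partial_x^4 f\,\partial_x^i g\,\partial_x^{5-i}f\,dx$, handle the $i=0$ term by one further integration by parts (the paper's $E_{1,0}$), and control the remaining terms with Poincar\'e together with the pointwise bounds of Lemma~\ref{for closing bootstrap}. Your write-up is in fact more explicit than the paper's about the rescaling bookkeeping (tracking the $\lambda^{i-1}$ prefactor and splitting $B_n=B_-+B_{n,k}+B_+$), and your top-order check for $i=4$ goes through cleanly since $\lambda^3\cdot (A/r)^{2k}/x_{n,k}=x_{n,k}^2(r/A)^k\le (Ar)^k$, so the fallback lower bound on $x_{n,k}$ is not actually needed.
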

\begin{proof}
    Integration by parts yields
    \begin{align*}
        E_1
        =&\sum_{l=0}^4\frac{2b}{x_{n,k}}\left(\frac{A}{r}\right)^kC_4^lx_{n,k}^l(t)\left(\frac{r}{A}\right)^{lk}\times\\
        &\times\int\partial_x^4(W_{n,k}-\phi)\partial_x^l\partial_xHB_n\left(xx_{n,k}(t)\left(\frac{r}{A}\right)^k,t\right)\partial_x^{5-l}(W_{n,k}-\phi)dx\\
        &:=\sum_{l=0}^4E_{1,l}(t).
    \end{align*}
    \begin{align*}
       E_{1,0}
       =&\frac{2b}{x_{n,k}}\left(\frac{A}{r}\right)^k\int\partial_x^4(W_{n,k}-\phi)\partial_xHB_n\left(xx_{n,k}(t)\left(\frac{r}{A}\right)^k,t\right)\partial_x^{5}(W_{n,k}-\phi)dx\\
       =&-b\int\partial_x^4(W_{n,k}-\phi)\partial_x^2HB_n\left(xx_{n,k}(t)\left(\frac{r}{A}\right)^k,t\right)\partial_x^{4}(W_{n,k}-\phi)dx\\
       \leq&C_{11}(b,r,\epsilon)\|W_{n,k}-\phi\|_{\dot{H}^4}
    \end{align*}
    When $l\geq1$, by Lemma \ref{for closing bootstrap} and Poincar\'e inequalities, we have
    \begin{align*}
        E_{1,l}
        =&2bC_4^{l}x_{n,k}^{l-1}\left(\frac{r}{A}\right)^{(l-1)k}\times\\
        &\times\int\partial_x^4(W_{n,k}-\phi)\partial_x^l\partial_xHB_n\left(xx_{n,k}(t)\left(\frac{r}{A}\right)^k,t\right)\partial_x^{5-l}(W_{n,k}-\phi)dx\\
        &\leq C\|W_{n,k}-\phi\|_{\dot{H}^4}^2
    \end{align*}
\end{proof}
\begin{lemma}
   $E_2(t)\leq C\|W_{n,k}-\phi\|_{\dot{H}^4}$
\end{lemma}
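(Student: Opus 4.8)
The plan is to follow the scheme already used for $E_1$, exploiting that the factor $\phi'$ is a fixed function rather than another copy of $W_{n,k}-\phi$; this is precisely why the resulting bound will be linear, and not quadratic, in $\|W_{n,k}-\phi\|_{\dot{H}^4}$. First I would integrate by parts four times to transfer four derivatives off $\partial_x^8(W_{n,k}-\phi)$; since $W_{n,k}-\phi$ is compactly supported the boundary terms vanish, and because $(-1)^4=1$ one obtains, after pulling the chain-rule factor out of the $l$-th derivative of $HB_n(xx_{n,k}(t)(r/A)^k,t)$,
\begin{align*}
    E_2=\sum_{l=0}^{4}\frac{2b}{x_{n,k}}\left(\frac{A}{r}\right)^k C_4^l\, x_{n,k}^l(t)\left(\frac{r}{A}\right)^{lk}\int\partial_x^4(W_{n,k}-\phi)\,\partial_x^{l+1}HB_n\!\left(xx_{n,k}(t)\left(\frac{r}{A}\right)^k,t\right)\partial_x^{4-l}\phi'\,dx,
\end{align*}
where the scalar in front of the $l$-th integral equals $2bC_4^l\, x_{n,k}^{l-1}(t)(r/A)^{(l-1)k}$ and the derivatives on $HB_n$ are taken before substitution. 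For each $l$ I would then apply Cauchy--Schwarz, keeping $\partial_x^4(W_{n,k}-\phi)$ in $L^2$ — this yields the single factor $\sqrt{E_{n,k}}=\|W_{n,k}-\phi\|_{\dot{H}^4}$ — and bounding the $L^2$ norm of $\partial_x^{l+1}HB_n(\cdots)\,\partial_x^{4-l}\phi'$ by $\|\partial_x^{4-l}\phi'\|_{L^2}$ times $\sup|\partial_x^{l+1}HB_n|$ over $\text{supp}\,B_{n,k}(\cdot,t)$, using that $\phi$ is a fixed smooth function supported in $[-1-2r,-1+2r]\cup[1-2r,1+2r]$ and that $x\mapsto xx_{n,k}(t)(r/A)^k$ sends $\text{supp}(W_{n,k}-\phi)$ into $\text{supp}\,B_{n,k}(\cdot,t)$.

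On that set, $|\partial_x^{l+1}HB_-|$ and $|\partial_x^{l+1}HB_+|$ for $l+1=1,\dots,5$ are controlled by Lemma~\ref{for closing bootstrap}, while the diagonal piece $\partial_x^N HB_{n,k}(y,t)=x_{n,k}^{4-N}(t)(r/A)^{(3-N)k}(H\partial_x^N W_{n,k})(A^ky/(x_{n,k}r^k),t)$ is bounded by $C(N)\,x_{n,k}^{4-N}(t)(r/A)^{(3-N)k}$ via the uniform estimate $\|W_{n,k}\|_{\dot{H}^N}\le C(N)$ and the Sobolev embedding $H^1\hookrightarrow L^\infty$. It then remains to check, for each $l$, that the scalar prefactor $x_{n,k}^{l-1}(t)(r/A)^{(l-1)k}$ times these pointwise bounds is $\le C(b,r,\epsilon)$; for $l\ge1$ this is immediate from $x_{n,k}(t)\le A^k$ together with $0<A,r<1$, exactly as in the treatment of $E_1$, and summing the five contributions then gives $|E_2|\le C(b,r,\epsilon)\|W_{n,k}-\phi\|_{\dot{H}^4}$ with constants independent of $n$ and $k$.

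The one delicate point — and the main obstacle — is the term $l=0$, where the prefactor $x_{n,k}^{-1}(t)(A/r)^k$ carries the vanishing quantity $x_{n,k}(t)$ in the denominator. Against $HB_-$ this is harmless because $|\partial_xHB_-|\le C_1\,x_{n,k}(t)(r/A)^k$ cancels the $x_{n,k}^{-1}$; against $HB_{n,k}$ the $N=1$ diagonal bound gives $x_{n,k}^{-1}(A/r)^k\cdot C\,x_{n,k}^3(r/A)^{2k}=C\,x_{n,k}^2(t)(r/A)^k\le C\,(Ar)^k\le C$; but against $HB_+$ the estimate $|\partial_xHB_+|\le C_{10}(Ar)^k$ only leaves $C_{10}\,A^{2k}/x_{n,k}(t)$, so one also needs the matching lower bound $x_{n,k}(t)\ge e^{-C|t|}A^k$ on $[-T,0]$. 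That lower bound is read off from the defining ODE, since $\big|\dot{x}_{n,k}/x_{n,k}\big|=\big|H\partial_x^2B_-(0,t)\big|\le C\sum_{j=0}^{k-1}A^j\le C/(1-A)$ by Lemma~\ref{control aj}, and therefore $A^{2k}/x_{n,k}(t)\le e^{C|t|}A^k\le e^{CT}$. With this the $l=0$ term is controlled and the lemma follows; no new idea beyond those used for $E_1$ is required, only care in tracking the powers of $x_{n,k}$ in that one term.
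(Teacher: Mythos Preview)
Your argument is correct and follows the same route as the paper: integrate by parts four times so that only $\partial_x^4(W_{n,k}-\phi)$ remains, then control the product of $\partial_xHB_n(\cdots)$ with $\phi'$ against it using the pointwise bounds of Lemma~\ref{for closing bootstrap}. The only difference is cosmetic --- the paper packages the product estimate via the Kato--Ponce inequality rather than writing out the full Leibniz expansion --- except that you are more explicit on one point: for the $l=0$ contribution from $HB_+$ you correctly observe that the prefactor leaves $A^{2k}/x_{n,k}(t)$ and supply the lower bound $x_{n,k}(t)\ge e^{-CT}A^k$ from the ODE, a step the paper's proof leaves implicit.
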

\begin{proof}
    Note that
    \begin{align*}
        E_2(t)
        =&2b\frac{1}{x_{n,k}}\left(\frac{A}{r}\right)^k\int\partial_x^8(W_{n,k}-\phi)\partial_xHB_n\left(xx_{n,k}(t)\left(\frac{r}{A}\right)^k,t\right)\phi'dx\\
        =&2b\frac{1}{x_{n,k}}\left(\frac{A}{r}\right)^k\int\partial_x^4(W_{n,k}-\phi)\partial_x^4\left(\partial_xHB_n\left(xx_{n,k}(t)\left(\frac{r}{A}\right)^k,t\right)\phi'\right)dx.
    \end{align*}
    Then by Kato-Ponce inequality and Lemma \ref{for closing bootstrap}, we have
    \begin{align*}
        E_2(t)\leq
        &Cx_{n,k}^3\left(\frac{r}{A}\right)^3\left\|\partial_x^5HB_n\left(xx_{n,k}(t)\left(\frac{r}{A}\right)^k,t\right)\right\|_{L^2}\|\phi'\|_{L^{\infty}}\|W_{n,k}-\phi\|_{\dot{H}^4}+\\
        &+C\left\|\partial_xHB_n\left(xx_{n,k}(t)\left(\frac{r}{A}\right)^k,t\right)\right\|_{L^{\infty}}\left\|\frac{d^5\phi}{dx^5}\right\|_{L^{2}}\|W_{n,k}-\phi\|_{\dot{H}^4}\\
        \leq&C\|W_{n,k}-\phi\|_{\dot{H}^4}
    \end{align*}
\end{proof}
\begin{lemma}
    $E_3(t)\leq C\|W_{n,k}-\phi\|_{\dot{H}^4}$
\end{lemma}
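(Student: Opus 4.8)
The plan is to exploit that the coefficient $\frac{x\dot{x}_{n,k}(t)}{x_{n,k}(t)}$ appearing in $E_3$ is harmless: the factor $\frac{\dot{x}_{n,k}}{x_{n,k}}$ does not depend on $x$ and is bounded uniformly in $n$, $k$ and $t$, while the remaining factor $x$ is bounded on $\text{supp}(W_{n,k}-\phi)$. So, after using integration by parts to move four derivatives off the top-order factor $\partial_x^8(W_{n,k}-\phi)$, the integral should collapse to a constant multiple of $\|W_{n,k}-\phi\|_{\dot{H}^4}^2$, which is controlled because $\|W_{n,k}-\phi\|_{\dot{H}^4}\leq\epsilon$ by the bootstrapping ansatz.

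In detail, I would write $f=W_{n,k}-\phi$ and first record, from the definition (\ref{at 0 is 0}), that $\frac{\dot{x}_{n,k}}{x_{n,k}}=\partial_x^2 HB_-(0,t)$, which by Lemma \ref{for closing bootstrap} satisfies $\bigl|\frac{\dot{x}_{n,k}}{x_{n,k}}\bigr|\leq C_2(r,\epsilon)$ with a bound independent of $x$ (it is a function of $t$ only). Since the bootstrapping ansatz gives $\text{supp}\,f\subset[-1-2r,-1+2r]\cup[1-2r,1+2r]$, all boundary terms in the integrations by parts vanish. Integrating by parts four times, pulling out the $x$-independent factor, and using the Leibniz rule together with $\partial_x^2 x=0$, one gets
\[
E_3=\frac{\dot{x}_{n,k}}{x_{n,k}}\int\partial_x^4 f\,\partial_x^4\bigl(x\,\partial_x f\bigr)\,dx=\frac{\dot{x}_{n,k}}{x_{n,k}}\int\partial_x^4 f\,\bigl(x\,\partial_x^5 f+4\,\partial_x^4 f\bigr)\,dx.
\]
The only genuinely top-order piece is then handled by the identity
\[
\int x\,\partial_x^4 f\,\partial_x^5 f\,dx=\frac{1}{2}\int x\,\partial_x\bigl((\partial_x^4 f)^2\bigr)\,dx=-\frac{1}{2}\int(\partial_x^4 f)^2\,dx,
\]
again with no boundary contribution, leaving $E_3=\frac{7}{2}\,\frac{\dot{x}_{n,k}}{x_{n,k}}\,\|f\|_{\dot{H}^4}^2$. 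Hence
\[
|E_3|\leq\frac{7}{2}\,C_2(r,\epsilon)\,\|W_{n,k}-\phi\|_{\dot{H}^4}^2\leq C\,\|W_{n,k}-\phi\|_{\dot{H}^4},
\]
the last inequality using the second bootstrapping assumption $\|W_{n,k}-\phi\|_{\dot{H}^4}\leq\epsilon\leq1$; all constants are independent of $n$ and $k$.

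I do not expect a genuine obstacle here. The one point that must not be skipped is the cancellation in the top-order term: a direct bound on $\int\partial_x^4 f\,(x\,\partial_x^5 f)\,dx$ would need control of $\|\partial_x^5 f\|_{L^2}$, which the bootstrap does not supply, so it is essential to integrate by parts once more and turn it into $-\frac{1}{2}\|f\|_{\dot{H}^4}^2$. The rest is just keeping track of the $(r,\epsilon)$-dependent constants.
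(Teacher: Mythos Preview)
Your proof is correct and follows the same approach as the paper: integrate by parts four times to reach $\int\partial_x^4 f\,(x\partial_x^5 f+4\partial_x^4 f)\,dx$, then integrate by parts once more on the top-order term to obtain the coefficient $-\tfrac{1}{2}+4=\tfrac{7}{2}$ in front of $\|W_{n,k}-\phi\|_{\dot H^4}^2$, and bound $\dot x_{n,k}/x_{n,k}$ via Lemma~\ref{for closing bootstrap}. Your proof is in fact slightly more explicit than the paper's, since you spell out the passage from the $\dot H^4$-squared bound to the $\dot H^4$ bound stated in the lemma via the bootstrap hypothesis.
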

\begin{proof}
    By Lemma \ref{for closing bootstrap} and integration by parts,
    \begin{align*}
        E_3(t)
        =&\int\partial_x^8(W_{n,k}-\phi)\frac{x\dot{x}_{n,k}(t)}{x_{n,k}(t)}\partial_x(W_{n,k}-\phi)dx\\
        =&-\frac{1}{2}\int\partial_x^4(W_{n,k}-\phi)\frac{\dot{x}_{n,k}(t)}{x_{n,k}(t)}\partial_x^4(W_{n,k}-\phi)dx+\\
        &+4\int\partial_x^4(W_{n,k}-\phi)\frac{\dot{x}_{n,k}(t)}{x_{n,k}(t)}\partial_x^4(W_{n,k}-\phi)dx\\
        \leq&C\|W_{n,k}-\phi\|_{\dot{H}^4}^2
    \end{align*}
\end{proof}
\begin{lemma}
    $E_4(t)\leq C\|W_{n,k}-\phi\|_{\dot{H}^4}$
\end{lemma}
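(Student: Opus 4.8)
The plan is to estimate $E_4$ by the same integration-by-parts scheme already used for $E_3$, the simplification being that one of the two factors is now the fixed profile $\phi$ rather than $W_{n,k}-\phi$, so the resulting bound carries only a first power of $\|W_{n,k}-\phi\|_{\dot{H}^4}$ (this is why the claimed inequality is linear, not quadratic). First I would pull the scalar prefactor out of the integral and recall from (\ref{at 0 is 0}) that $\dot{x}_{n,k}(t)/x_{n,k}(t)=H\partial_x^2B_-(0,t)$, so that by Lemma \ref{for closing bootstrap} (the line $|\partial_x^2HB_-(x,t)|\le C_2(r,\epsilon)$, which in particular holds at $x=0$) one has $|\dot{x}_{n,k}/x_{n,k}|\le C_2(r,\epsilon)$, with $C_2$ independent of $n$ and $k$. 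Hence
\[
  E_4(t)=\frac{\dot{x}_{n,k}(t)}{x_{n,k}(t)}\int \partial_x^8(W_{n,k}-\phi)\,x\phi'\,dx,
\]
and it remains to bound this integral by $C(r,\epsilon)\,\|W_{n,k}-\phi\|_{\dot{H}^4}$.

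Next I would integrate by parts four times, moving four derivatives off $W_{n,k}-\phi$ and onto $x\phi'$. By the compactly supported ansatz for $W_{n,k}(\cdot,t)$, and since $\phi$ is compactly supported as well, no boundary terms arise, so
\[
  \int \partial_x^8(W_{n,k}-\phi)\,x\phi'\,dx=\int \partial_x^4(W_{n,k}-\phi)\,\partial_x^4(x\phi')\,dx.
\]
The Leibniz rule gives $\partial_x^4(x\phi')=x\,\partial_x^5\phi+4\,\partial_x^4\phi$, and since $\phi\in C_c^\infty$ this function has an $L^2$ norm bounded by a constant depending only on $\phi$ (hence only on $r$). Cauchy--Schwarz then yields
\[
  |E_4(t)|\le \left|\frac{\dot{x}_{n,k}}{x_{n,k}}\right|\,\|W_{n,k}-\phi\|_{\dot{H}^4}\,\|\partial_x^4(x\phi')\|_{L^2}\le C(r,\epsilon)\,\|W_{n,k}-\phi\|_{\dot{H}^4},
\]
which is the assertion.

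I do not expect a genuine obstacle here. The two points that need a little care are that the four integrations by parts leave no boundary contribution — this is precisely what the compactly supported ansatz guarantees — and that every constant introduced (the bound $C_2(r,\epsilon)$ for $\dot{x}_{n,k}/x_{n,k}$ and the $L^2$ norm of $x\,\partial_x^5\phi+4\,\partial_x^4\phi$) is independent of $n$ and $k$, which is what is ultimately needed so that the coefficients $C_{13}, C_{14}$ in the differential inequality for $\sqrt{E_{n,k}}$ are $n,k$-independent. The identical argument, with the last factor $\phi$ in place of $x\phi'$ and the extra harmless constant $-3$, also disposes of $E_8$.
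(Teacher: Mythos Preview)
Your argument is correct and is essentially the same as the paper's: integrate by parts four times to land on $\int\partial_x^4(W_{n,k}-\phi)\,\partial_x^4(x\phi')\,dx$, expand $\partial_x^4(x\phi')=x\partial_x^5\phi+4\partial_x^4\phi$, and bound the prefactor $\dot{x}_{n,k}/x_{n,k}$ via Lemma~\ref{for closing bootstrap}. If anything, your write-up is more careful than the paper's in spelling out why $|\dot{x}_{n,k}/x_{n,k}|$ is bounded uniformly in $n,k$ and why the boundary terms vanish.
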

\begin{proof}
    By Lemma \ref{for closing bootstrap} and integration by parts,
    \begin{align*}
        E_4(t)
        =&\int\partial_x^8(W_{n,k}-\phi)\frac{x\dot{x}_{n,k}(t)}{x_{n,k}(t)}\phi'dx\\
        =&\int\partial_x^4(W_{n,k}-\phi)\frac{x\dot{x}_{n,k}(t)}{x_{n,k}(t)}\frac{d^5\phi}{dx^5}dx+4\int\partial_x^4(W_{n,k}-\phi)\frac{\dot{x}_{n,k}(t)}{x_{n,k}(t)}\frac{d^4\phi}{dx^4}dx\\
        \leq&C\|W_{n,k}-\phi\|_{\dot{H}^4}
    \end{align*}
\end{proof}
\begin{lemma}
    $E_5(t)\leq C\|W_{n,k}-\phi\|_{\dot{H}^4}$
\end{lemma}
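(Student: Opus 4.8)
The plan is to treat $E_5$ exactly as $E_1$ was treated: integrate by parts four times to move half of the derivatives off $\partial_x^8(W_{n,k}-\phi)$ and then expand by the Leibniz rule. Writing $f=W_{n,k}-\phi$ and $g(x)=\partial_x^2HB_n\!\left(x\,x_{n,k}(t)(r/A)^k,t\right)$, one gets
\[
E_5=b\int \partial_x^4 f\,\partial_x^4\!\big(g\,f\big)\,dx
   =b\sum_{l=0}^{4}\binom{4}{l}\int \partial_x^4 f\;(\partial_x^l g)\;(\partial_x^{4-l}f)\,dx ,
\]
so everything reduces to bounding $\|\partial_x^l g\|_{L^\infty(\operatorname{supp} f)}$ for $l=0,\dots,4$ and then absorbing the two $f$-factors into $\|f\|_{\dot{H}^4}$.

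For the $L^\infty$ bounds I would split $B_n=B_-+B_{n,k}+B_+$ and use the chain rule $\partial_x^l g(x)=\big(x_{n,k}(r/A)^k\big)^l\,\partial_x^{\,l+2}HB_n\!\left(x\,x_{n,k}(r/A)^k,t\right)$. On the $B_{n,k}$ piece the scaling relation for $B_{n,k}$ gives $\partial_x^{\,l+2}HB_{n,k}\!\left(y\,x_{n,k}(r/A)^k,t\right)=x_{n,k}^{\,2-l}(A/r)^{(l-1)k}\,\partial_x^{\,l+2}HW_{n,k}(y,t)$, so the $k$- and $x_{n,k}$-dependent prefactors collapse to $x_{n,k}^{2}(r/A)^k\le 1$, while $\|\partial_x^{\,l+2}HW_{n,k}\|_{L^\infty}$ (up to six derivatives, since $l\le 4$) is controlled by the uniform higher-order energy bounds $\|W_{n,k}\|_{\dot{H}^N}\le C(N)$ together with the $L^2$-boundedness of $H$ and Sobolev embedding. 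On $B_-$ and $B_+$ I would feed the bounds of Lemma~\ref{for closing bootstrap} (for $\partial_x^{\,l+2}HB_\pm$ with $l+2\le 5$) and of Lemma~\ref{for higher energy estimates} (for $\partial_x^6HB_\pm=\partial_x^3H\partial_x^3B_\pm$, the case $N=3$) into the same prefactor; for each $l=0,\dots,4$ one checks that the powers of $A$, $r$ and $x_{n,k}$ cancel down to a uniformly bounded quantity. This yields $\|\partial_x^l g\|_{L^\infty}\le C(r,\epsilon)$, uniformly in $n$ and $k$.

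To conclude, since $f$ and all its derivatives vanish at the endpoints of the two length-$4r$ intervals forming $\operatorname{supp} f$, the Poincar\'e inequality gives $\|\partial_x^{4-l}f\|_{L^2}\le C(r)\,\|\partial_x^4 f\|_{L^2}$ for $0\le l\le 4$. Combining the three ingredients, $|E_5|\le C(r,\epsilon)\|f\|_{\dot{H}^4}^2$, and using the bootstrap bound $\|f\|_{\dot{H}^4}\le\epsilon\le 1$ this is $\le C(r,\epsilon)\|W_{n,k}-\phi\|_{\dot{H}^4}$, as claimed.

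The main obstacle is bookkeeping: one must verify, term by term in $l$, that the growing $(A/r)^{(l+2)k}$-type factors produced by differentiating $\partial_x^2HB_n$ are exactly killed by the $\big(x_{n,k}(r/A)^k\big)^l$ coming from the rescaling of the argument (and by $x_{n,k}\le A^k$), leaving constants independent of $n$ and $k$. The one place where this is not immediate from Lemma~\ref{for closing bootstrap} is the sixth derivative $\partial_x^6HB_\pm$, which is precisely the reason Lemma~\ref{for higher energy estimates} is needed here.
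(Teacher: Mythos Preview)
Your proposal is correct and follows essentially the same route as the paper: integrate by parts four times to reach $b\int \partial_x^4 f\,\partial_x^4(gf)\,dx$, then control the product using Lemma~\ref{for closing bootstrap}, Lemma~\ref{for higher energy estimates}, and Poincar\'e. The only cosmetic difference is that the paper packages the product estimate as a Kato--Ponce inequality, whereas you expand via the Leibniz rule and bound each term by hand; the content is the same.
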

\begin{proof}
    \begin{align*}
        E_5
        =&\int\partial_x^8(W_{n,k}-\phi)b\partial_x^2HB_{n}\left(xx_{n,k}(t)\left(\frac{r}{A}\right)^k,t\right)(W_{n,k}-\phi)dx\\
        =&b\int\partial_x^4(W_{n,k}-\phi)\partial_x^4\left(\partial_x^2HB_{n}\left(xx_{n,k}(t)\left(\frac{r}{A}\right)^k,t\right)(W_{n,k}-\phi)\right)dx
    \end{align*}
    Then by Lemma \ref{for higher energy estimates} and Kato-Ponce inequality,
    \begin{align*}
        E_5(t)\leq C\|W_{n,k}-\phi\|_{\dot{H}^4}
    \end{align*}
\end{proof}
\begin{lemma}
    $E_6(t)\leq C\|W_{n,k}-\phi\|_{\dot{H}^4}$
\end{lemma}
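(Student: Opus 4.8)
The plan is to mirror the argument used for $E_5$, but now the low-order factor multiplying the coefficient $b\,\partial_x^2 HB_n$ is the fixed profile $\phi$ rather than the difference $W_{n,k}-\phi$. First I would move four derivatives off $\partial_x^8(W_{n,k}-\phi)$ by integration by parts, so that
\begin{align*}
    E_6 = b\int \partial_x^4(W_{n,k}-\phi)\,\partial_x^4\!\left(\partial_x^2 HB_n\!\left(x\,x_{n,k}(t)(r/A)^k,t\right)\phi\right)dx.
\end{align*}
Then I would apply the Leibniz rule to the second factor and estimate each of the five resulting terms by Cauchy--Schwarz: the $\dot H^4$-seminorm of the difference is pulled out, and one is left to bound $\bigl\|\partial_x^{4-m}\bigl(\partial_x^2 HB_n(x\,x_{n,k}(r/A)^k,t)\bigr)\,\partial_x^m\phi\bigr\|_{L^2}$ for $m=0,\dots,4$. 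Since $\phi$ and all its derivatives are bounded smooth compactly supported functions, each such term reduces to controlling $\bigl\|\partial_x^{2+(4-m)}HB_n(x\,x_{n,k}(t)(r/A)^k,t)\bigr\|_{L^2\bigl(\mathrm{supp}\,\phi\bigr)}$ after accounting for the chain-rule factors $\bigl(x_{n,k}(t)(r/A)^k\bigr)^{4-m}$.

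The key step is therefore the bookkeeping of these chain-rule factors against the derivative bounds supplied by Lemmas \ref{for closing bootstrap} and \ref{for higher energy estimates}. On $\mathrm{supp}\,\phi$ we have $x\in[-1-2r,1+2r]$, so $x\,x_{n,k}(t)(r/A)^k$ lies in $\mathrm{supp}\,B_{n,k}(\cdot,t)$, and those lemmas give $|\partial_x^N HB_n| \lesssim (A/r)^{(N-2)k}\,x_{n,k}(t)^{-(N-2)}$ for $N\geq 2$ (splitting $B_n$ into $B_{n,k}$, $B_-$, $B_+$ and using that the $B_{n,k}$-contribution is $H\partial_x^N W_{n,k}$ rescaled). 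Multiplying the $L^\infty$ bound for $\partial_x^{6-m}HB_n\bigl(x\,x_{n,k}(t)(r/A)^k,t\bigr)$, which carries a factor $(A/r)^{(4-m)k}\,x_{n,k}(t)^{-(4-m)}$ from the derivative bound together with $\bigl(x_{n,k}(t)(r/A)^k\bigr)^{4-m}$ from the chain rule, the powers of $x_{n,k}(t)$ and $(r/A)^k$ cancel exactly, leaving a constant $C(r,\epsilon)$ independent of $n$ and $k$. The $L^2$ norm over the compact set $\mathrm{supp}\,\phi$ then costs only another $r$-dependent constant, and multiplying by $\|\partial_x^m\phi\|_{L^\infty}\|\phi\|_{L^\infty}$ or $\|\partial_x^m\phi\|_{L^2}$ finishes the bound.

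The main obstacle is precisely verifying that the scaling factors cancel in every one of the five Leibniz terms, so that no negative power of $x_{n,k}(t)$ (which degenerates as $t\to 0$) survives; this is where one must be careful to use the \emph{correct} exponent pairing, namely $(4-m)$ chain-rule factors $x_{n,k}(t)(r/A)^k$ against a $(6-m)$-th derivative of $HB_n$ whose bound scales like $\bigl((A/r)^k x_{n,k}(t)^{-1}\bigr)^{4-m}$. Once that cancellation is checked, the estimate $E_6(t)\leq C(b,r,\epsilon)\|W_{n,k}-\phi\|_{\dot H^4}$ is immediate, and I would simply remark that the remaining terms are handled as in the proof of $E_5$.
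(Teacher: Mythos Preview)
Your proposal is correct and follows essentially the same approach as the paper: integrate by parts four times to obtain
\[
E_6 = b\int \partial_x^4(W_{n,k}-\phi)\,\partial_x^4\!\left(\partial_x^2 HB_n\bigl(x\,x_{n,k}(t)(r/A)^k,t\bigr)\phi\right)dx,
\]
and then control the product factor via Lemmas~\ref{for closing bootstrap} and~\ref{for higher energy estimates}. The only cosmetic difference is that the paper packages the five Leibniz terms into a single invocation of the Kato--Ponce inequality, whereas you expand them by hand and check the scaling cancellation termwise; for integer-order derivatives these are equivalent, and your explicit verification that the chain-rule factors $\bigl(x_{n,k}(r/A)^k\bigr)^{4-m}$ exactly offset the growth of $\partial_x^{6-m}HB_n$ is precisely the content hidden behind the paper's appeal to Lemma~\ref{for higher energy estimates}.
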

\begin{proof}
    \begin{align*}
        E_6
        =&\int\partial_x^8(W_{n,k}-\phi)b\partial_x^2HB_{n}\left(xx_{n,k}(t)\left(\frac{r}{A}\right)^k,t\right)\phi dx\\
        =&b\int\partial_x^4(W_{n,k}-\phi)\partial_x^4\left(\partial_x^2HB_{n}\left(xx_{n,k}(t)\left(\frac{r}{A}\right)^k,t\right)\phi \right)dx\\
    \end{align*}
    Then by Lemma \ref{for higher energy estimates} and Kato-Ponce inequality,
    \begin{align*}
        E_6(t)\leq C\|W_{n,k}-\phi\|_{\dot{H}^4}
    \end{align*}
\end{proof}
\begin{lemma}
    \begin{align*}
        E_7&\leq C\|W_{n,k}-\phi\|_{\dot{H}^4}^2\\
        E_8&\leq C\|W_{n,k}-\phi\|_{\dot{H}^4}
    \end{align*}
\end{lemma}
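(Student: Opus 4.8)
The plan is to treat both terms with the same two observations. First, I would note that the scalar factor $\dot{x}_{n,k}/x_{n,k}$ appearing in $E_7$ and $E_8$ is, by the defining ODE (\ref{at 0 is 0}), exactly $H\partial_x^2 B_-(0,t) = \sum_{j=0}^{k-1} H\partial_x^2 W_{n,j}(0,t)\, x_{n,j}(t)$; using Lemma \ref{control aj} (equivalently the second estimate of Lemma \ref{for closing bootstrap}) together with $x_{n,j}(t)\le A^j$ and $A<1$, this is bounded in absolute value by a constant $C(r,\epsilon)$, uniformly in $n$, $k$ and $t\in[-T,0]$. Second, since each $W_{n,k}(\cdot,t)$ is supported in $[-1-2r,-1+2r]\cup[1-2r,1+2r]$ by the bootstrapping ansatz (and $\phi$ is supported there as well), every integration by parts below has vanishing boundary terms.

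For $E_7$, I would set $f=W_{n,k}-\phi$ and integrate by parts four times, moving four derivatives off the top-order factor with sign $(-1)^4=1$, so that $\int_{\mathbb{R}}\partial_x^8 f\cdot f\,dx = \int_{\mathbb{R}}\partial_x^4 f\cdot\partial_x^4 f\,dx = \|f\|_{\dot{H}^4}^2$. Hence $|E_7| = 3\,|H\partial_x^2 B_-(0,t)|\,\|f\|_{\dot{H}^4}^2 \le C(r,\epsilon)\,\|W_{n,k}-\phi\|_{\dot{H}^4}^2$, which is the asserted bound.

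For $E_8$, the same four integrations by parts give $\int_{\mathbb{R}}\partial_x^8 f\cdot\phi\,dx = \int_{\mathbb{R}}\partial_x^4 f\cdot\partial_x^4\phi\,dx$, and Cauchy--Schwarz bounds this by $\|f\|_{\dot{H}^4}\,\|\phi\|_{\dot{H}^4}$. Since $\phi$ is a fixed smooth, compactly supported profile, $\|\phi\|_{\dot{H}^4}$ is a finite constant, so combining with the multiplier bound yields $|E_8| \le 3\,C(r,\epsilon)\,\|\phi\|_{\dot{H}^4}\,\|W_{n,k}-\phi\|_{\dot{H}^4} \le C\,\|W_{n,k}-\phi\|_{\dot{H}^4}$.

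The only point that is not completely routine is the uniform-in-$(n,k,t)$ control of the multiplier $\dot{x}_{n,k}/x_{n,k}$; once that is in hand both estimates are immediate, and together with the bounds on $E_1,\dots,E_6$ they produce the differential inequality $|\tfrac{d}{dt}\sqrt{E_{n,k}}|\le C_{13}\sqrt{E_{n,k}}+C_{14}$ with constants independent of $n$ and $k$, exactly as used to close the first bootstrapping ansatz.
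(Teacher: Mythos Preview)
Your argument is correct and is exactly the content the paper's one-line proof (``a direct consequence of Lemma~\ref{for closing bootstrap}'') is gesturing at: bound the scalar multiplier $\dot{x}_{n,k}/x_{n,k}=H\partial_x^2B_-(0,t)$ uniformly and then integrate by parts four times, using Cauchy--Schwarz for $E_8$. The only cosmetic point is that Lemma~\ref{for closing bootstrap} is literally stated for $x\in\mathrm{supp}\,B_{n,k}$, whereas you invoke it at $x=0$; but its proof in fact covers all $|x|\le(1+2r)x_{n,k}(r/A)^k$, and the paper itself uses the same bound at $x=0$ elsewhere, so this is not a gap.
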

\begin{proof}
    These two inequalities are a direct consequence of the Lemma \ref{for closing bootstrap}.
\end{proof}

\begin{center}
    $\mathbf{Acknowledgment}$
\end{center}
I would like to express my gratitude to Mimi Dai for the open problem and stimulating discussions.

\bibliography{Citations}

\begin{thebibliography}{10}

\bibitem{constantin1985simple}
Peter Constantin, Peter~D Lax, and Andrew Majda.
\newblock A simple one-dimensional model for the three-dimensional vorticity equation.
\newblock In {\em Selected Papers Volume I}, pages 106--115. Springer, 1985.

\bibitem{cordoba2023finitetimesingularities3d}
Diego C{\'o}rdoba, Luis Mart{\'\i}nez-Zoroa, and Fan Zheng.
\newblock Finite time singularities to the 3d incompressible euler equations for solutions in $c^{\infty}(\mathbb{R}^3 \setminus \{0\})\cap c^{1,\alpha}\cap l^2$.
\newblock {\em arXiv:2308.12197}, 2023.

\bibitem{dai2022reducedmodelselectronmagnetohydrodynamics}
Mimi Dai.
\newblock Reduced models for electron magnetohydrodynamics: well-posedness and singularity formation.
\newblock {\em arXiv:2204.01951}, 2022.

\bibitem{dai2025wellposednessblowup1delectron}
Mimi Dai.
\newblock Well-posedness and blowup of 1d electron magnetohydrodynamics.
\newblock {\em arXiv:2503.06383}, 2025.

\bibitem{Dai2023}
Mimi Dai, Bhakti Vyas, and Xiangxiong Zhang.
\newblock 1d model for the 3d magnetohydrodynamics.
\newblock {\em Journal of Nonlinear Science}, 33(5):87, 2023.

\bibitem{de1990one}
Salvatore De~Gregorio.
\newblock On a one-dimensional model for the three-dimensional vorticity equation.
\newblock {\em Journal of statistical physics}, 59:1251--1263, 1990.

\bibitem{de1996partial}
Salvatore De~Gregorio.
\newblock A partial differential equation arising in a 1d model for the 3d vorticity equation.
\newblock {\em Mathematical methods in the applied sciences}, 19(15):1233--1255, 1996.

\bibitem{elgindi2021stable}
Tarek~M Elgindi, Tej-Eddine Ghoul, and Nader Masmoudi.
\newblock Stable self-similar blow-up for a family of nonlocal transport equations.
\newblock {\em Analysis \& PDE}, 14(3):891--908, 2021.

\bibitem{elgindi2020effects}
Tarek~M Elgindi and In-Jee Jeong.
\newblock On the effects of advection and vortex stretching.
\newblock {\em Archive for Rational Mechanics and Analysis}, 235(3):1763--1817, 2020.

\bibitem{jia2019gregorio}
Hao Jia, Samuel Stewart, and Vladimir Sverak.
\newblock On the de gregorio modification of the constantin--lax--majda model.
\newblock {\em Archive for Rational Mechanics and Analysis}, 231:1269--1304, 2019.

\bibitem{okamoto2008generalization}
Hisashi Okamoto, Takashi Sakajo, and Marcus Wunsch.
\newblock On a generalization of the constantin--lax--majda equation.
\newblock {\em Nonlinearity}, 21(10):2447, 2008.

\end{thebibliography}
\bibliographystyle{plain}
\end{document}